\newtheorem{theorem}{Theorem}[section]
\newtheorem{corollary}[theorem]{Corollary}
\newtheorem{lemma}[theorem]{Lemma}
\newtheorem{definition}[theorem]{Definition}
\newtheorem{remark}[theorem]{Remark}
\newtheorem{example}[theorem]{Example}
\DeclareMathOperator*{\argmin}{arg\,min} 
\DeclareMathOperator*{\conv}{conv}
\newcommand{\R}{\mathbb{R}}
\newcommand{\N}{\mathbb{N}}
\begin{document}
	\title{An efficient descent method for locally Lipschitz multiobjective optimization problems}
	\author[1]{Bennet Gebken}
	\author[1]{Sebastian Peitz}
	\affil[1]{\normalsize Department of Mathematics, Paderborn University, Germany}
	
	\maketitle

	\begin{abstract}
		In this article, we present an efficient descent method for locally Lipschitz continuous multiobjective optimization problems (MOPs). The method is realized by combining a theoretical result regarding the computation of descent directions for nonsmooth MOPs with a practical method to approximate the subdifferentials of the objective functions. We show convergence to points which satisfy a necessary condition for Pareto optimality. Using a set of test problems, we compare our method to the multiobjective proximal bundle method by M{\"a}kel{\"a}. The results indicate that our method is competitive while being easier to implement. While the number of objective function evaluations is larger, the overall number of subgradient evaluations is lower. Finally, we show that our method can be combined with a subdivision algorithm to compute entire Pareto sets of nonsmooth MOPs.
	\end{abstract}
	
	\section{Introduction}
	In many scenarios in real life, the problem of optimizing multiple objectives at the same time arises. In engineering for example, one often wants to steer a physical system as close as possible to a desired state while minimizing the required energy cost at the same time. These problems are called \emph{multiobjective optimization problems} (MOPs) and generally do not possess a single optimal solution. Instead, the solution is the set of all optimal compromises, the so-called \emph{Pareto set} containing all \emph{Pareto optimal} points. Due to this, the numerical computation of solutions to MOPs is more challenging than to single-objective problems. On top of that, there are numerous applications where the objectives are nonsmooth, for example contact problems in mechanics, which adds to the difficulty. In this article, we will address both difficulties combined by considering nonsmooth MOPs.
	
	When addressing the above-mentioned difficulties, i.e., multiple objectives and nonsmoothness, separately, there exists a large number solution methods. For smooth MOPs, the most popular methods include evolutionary \cite{D2001, DPAM2002} and scalarization methods \cite{M1998}. Additionally, some methods from single-objective optimization have been generalized, like gradient descent methods \cite{FS2000,SSW2002,GPD2017} and Newton's method \cite{FGS2008}. For the nonsmooth single-objective case, commonly used methods include subgradient methods \cite{S1985}, bundle methods \cite{K1990} and gradient sampling methods \cite{BLO2005}. More recently, in \cite{MY2012}, a generalization of the steepest descent method to the nonsmooth case was proposed, which is based on an efficient approximation of the subdifferential of the objective function. For nonsmooth multiobjective optimization, the literature is a lot more scarce. Since classical scalarization approaches do not require the existence of gradients, they can still be used. In \cite{AGG2015}, a generalization of the steepest descent method was proposed for the case when the full subdifferentials of the objectives are known, which is rarely the case in practice. In \cite{B2013,NSFL2013}, the subgradient method was generalized to the multiobjective case, but both articles report that their method is not suitable for real life application due to inefficiency. In \cite{MKW2014} (see also \cite{K1985b,M2003}), a multiobjective version of the proximal bundle method was proposed, which currently appears to be the most efficient solver.
	
	In this article, we develop a new descent method for locally Lipschitz continuous MOPs by combining the descent direction from \cite{AGG2015} with the approximation of the subdifferentials from \cite{MY2012}. In \cite{AGG2015} it was shown that the element with the smallest norm in the negative convex hull of the subdifferentials of the objective functions is a common descent direction for all objectives. In \cite{MY2012}, the subdifferential of the objective function was approximated by starting with a single subgradient and then systematically computing new subgradients until the element with the smallest norm in the convex hull of all subgradients is a direction of (sufficient) descent. Combining both approaches yields a descent direction for locally Lipschitz MOPs and together with an Armijo step length, we obtain a descent method. We show convergence to points which satisfy a necessary condition for Pareto optimality. Using a set of test problems, we compare the performance of our method to the multiobjective proximal bundle method from \cite{MKW2014}. The results indicate that our method is inferior in terms of function evaluations, but superior in terms of subgradient evaluations.
	
	The structure of this article is as follows: we start with a short introduction to nonsmooth and multiobjective optimization in Section \ref{sec:introduction}. In Section \ref{sec:descent_method}, we derive our descent method by replacing the Clarke subdifferential for the computation of the descent direction by the Goldstein $\varepsilon$-subdifferential and then showing how the latter can be efficiently approximated. In Section \ref{sec:numerical_examples}, we apply our descent method to numerical examples. We first visualize and discuss the typical behavior of our method before comparing it to the multiobjective proximal bundle method from \cite{MKW2014} using a set of test problems. Afterwards, we show how our method can be combined with a subdivision algorithm to approximate entire Pareto sets. Finally, in Section \ref{sec:conclusion}, we draw a conclusion and discuss possible future work.
	
	\section{Nonsmooth multiobjective optimization} \label{sec:introduction}
	We consider the nonsmooth multiobjective optimization problem
	\begin{align} \tag{MOP} \label{eq:MOP}
		\min_{x \in \R^n} f(x) = \min_{x \in \R^n}
		\begin{pmatrix}
			f_1(x) \\
			\vdots \\
			f_k(x)
		\end{pmatrix},
	\end{align}
	where $f : \R^n \rightarrow \R^k$ is the \emph{objective vector} with components $f_i : \R^n \rightarrow \R$, $i \in \{1,...,k\}$, called \emph{objective functions}. We assume the objective functions to be \emph{locally Lipschitz continuous}, i.e., for each $i \in \{1,...,k\}$ and $x \in \R^n$, there is some $L_i > 0$ and $\varepsilon > 0$ with
	\begin{align*}
		|f_i(y)- f_i(z)| \leq L_i \| y - z \| \quad \forall y, z \in \{ y \in \R^n : \| x - y \| < \varepsilon \},
	\end{align*}		
	where $\| \cdot \|$ denotes the Euclidean norm in $\R^n$. Since \eqref{eq:MOP} is an optimization problem with a vector-valued objective function, the classical concept of optimality from the scalar case can not directly be conveyed. Instead, we are looking for the \emph{Pareto set}, which is defined in the following way:
		
	\begin{definition}
		A point $x \in \R^n$ is called \emph{Pareto optimal}, if there is no $y \in \R^n$ such that
		\begin{align*}
					f_i(y) &\leq f_i(x) \quad \forall i \in \{1,...,k\}, \\
					f_j(y) &< f_j(x) \quad \text{for some } j \in \{1,...,k\}.
		\end{align*}
		The set of all Pareto optimal points is the \emph{Pareto set}.					
	\end{definition}		

	In practice, to check if a given point is Pareto optimal, we need optimality conditions. In the smooth case, there are the well-known KKT conditions (cf.~\cite{M1998}), which are based on the gradients of the objective functions. In case the objective functions are merely locally Lipschitz, the KKT conditions can be generalized using the concept of \emph{subdifferentials}. In the following, we will recall the required definitions and results from nonsmooth analysis. For a more detailed introduction, we refer to \cite{C1983}.		

	\begin{definition}
		Let $\Omega_i \subseteq \R^n$ be the set of points where $f_i$ is not differentiable. Then
		\begin{align*}
			\partial f_i(x) = \conv ( \{ \xi \in \R^n : &\exists (x_j)_j \in \R^n \setminus \Omega_i \text{ with } x_j \rightarrow x \text{ and } \\
			&\nabla f_i(x_j) \rightarrow \xi \text{ for } j \rightarrow \infty \} )
		\end{align*}
		is the \emph{(Clarke) subdifferential of} $f_i$ \emph{in} $x$. $\xi \in \partial f_i(x)$ is a \emph{subgradient}.
	\end{definition}
	
	It is easy to see that if $f_i$ is continuously differentiable, then the Clarke subdifferential is the set containing only the gradient of $f_i$. We will later use the following technical result on some properties of the Clarke subdifferential (cf.~\cite{C1983}, Prop.~2.1.2).
	
	\begin{lemma}  \label{lem:subdiff}
		$\partial f_i(x)$ is nonempty, convex and compact.
	\end{lemma}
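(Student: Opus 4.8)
The plan is to write $\partial f_i(x) = \conv(G)$, where
\[
G = \{ \xi \in \R^n : \exists\, (x_j)_j \subseteq \R^n \setminus \Omega_i \text{ with } x_j \to x \text{ and } \nabla f_i(x_j) \to \xi \}
\]
is the set of limiting gradients appearing in the definition. Convexity of $\partial f_i(x)$ is then immediate, since the convex hull of any set is convex. It therefore remains to show that $G$ is nonempty and compact and to transfer these two properties to $\conv(G)$.

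For nonemptiness and boundedness I would invoke the local Lipschitz assumption: there is a neighborhood $U$ of $x$ and a constant $L_i > 0$ such that $f_i$ is $L_i$-Lipschitz on $U$. By Rademacher's theorem the nondifferentiability set $\Omega_i$ has Lebesgue measure zero, so $U \setminus \Omega_i$ is dense in $U$, and we may pick a sequence $x_j \to x$ with $x_j \in U \setminus \Omega_i$ (discarding the finitely many initial terms lying outside $U$). At any differentiability point of an $L_i$-Lipschitz function the gradient has norm at most $L_i$, so $(\nabla f_i(x_j))_j$ is bounded; by Bolzano--Weierstrass some subsequence converges to a limit $\xi$, and by construction $\xi \in G$, hence $G \neq \emptyset$. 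The same norm bound gives $\|\xi\| \leq L_i$ for every $\xi \in G$, so $G$, and consequently $\conv(G)$, is bounded.

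For closedness of $G$ I would use a diagonal argument. Given $\xi^{(m)} \in G$ with $\xi^{(m)} \to \xi$, the definition of $G$ provides, for each $m$, a point $y_m \in U \setminus \Omega_i$ with $\|y_m - x\| < 1/m$ and $\|\nabla f_i(y_m) - \xi^{(m)}\| < 1/m$. Then $y_m \to x$ and $\nabla f_i(y_m) \to \xi$, so $\xi \in G$; thus $G$ is closed and, being bounded, compact. Finally, the convex hull of a compact subset of $\R^n$ is compact: by Carathéodory's theorem $\conv(G)$ is the image of the compact set $\Delta_n \times G^{n+1}$ (with $\Delta_n$ the standard simplex) under the continuous map $(\lambda, \xi_0, \dots, \xi_n) \mapsto \sum_{j=0}^n \lambda_j \xi_j$, hence compact. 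Together with convexity this establishes the claim.

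The only step requiring genuine care is the diagonal argument for the closedness of $G$; everything else reduces to standard facts, with the appeal to Rademacher's theorem being the essential ingredient behind nonemptiness. If one prefers to avoid citing Rademacher explicitly, the nonemptiness could instead be quoted directly from \cite{C1983}, but since the paper already relies on that reference it seems cleanest to keep the short self-contained argument above.
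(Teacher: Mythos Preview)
Your argument is correct: the decomposition $\partial f_i(x) = \conv(G)$, Rademacher's theorem for nonemptiness, the Lipschitz bound on gradients for boundedness, the diagonal argument for closedness of $G$, and Carath\'eodory for compactness of the hull all go through as you describe.

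The paper itself, however, does not prove this lemma at all; it simply cites \cite{C1983}, Prop.~2.1.2, as indicated in the sentence preceding the statement. So your approach is genuinely different: where the paper treats the result as a black box from Clarke's monograph, you supply a short self-contained proof. What you gain is independence from the reference and an explicit route through Rademacher and Carath\'eodory; what the paper gains is brevity, since the result is entirely standard in nonsmooth analysis and its proof is not needed anywhere later in the article. Either choice is defensible, and your closing remark already anticipates this trade-off.
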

	
	Using the subdifferential, we can state a necessary optimality condition for locally Lipschitz MOPs (cf.~\cite{MEK2014}, Thm.~12).
	
	\begin{theorem} \label{thm:KKT}
		Let $x \in \R^n$ be Pareto optimal. Then
		\begin{align} \label{eq:KKT}
			0 \in \conv \left( \bigcup_{i=1}^k \partial f_i(x) \right).
		\end{align}
	\end{theorem}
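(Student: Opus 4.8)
The plan is to argue by contradiction: assume $x$ is Pareto optimal but $0 \notin \conv\left(\bigcup_{i=1}^k \partial f_i(x)\right)$, and then construct a direction $v$ along which \emph{all} objectives decrease simultaneously, contradicting Pareto optimality. The starting observation is that, by Lemma~\ref{lem:subdiff}, each $\partial f_i(x)$ is nonempty, convex and compact, so the finite union $\bigcup_{i=1}^k \partial f_i(x)$ is compact, and hence its convex hull $C := \conv\left(\bigcup_{i=1}^k \partial f_i(x)\right)$ is a nonempty compact convex set. If $0 \notin C$, the strict separation theorem for a point and a compact convex set yields a vector $v \in \R^n$ with $\langle \xi, v \rangle < 0$ for all $\xi \in C$; in particular $\langle \xi, v \rangle < 0$ for every $\xi \in \partial f_i(x)$ and every $i$. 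Equivalently, taking $v$ as the negative of the separating normal, $v$ makes a strictly obtuse angle with every subgradient of every objective.

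Next I would translate this separation property into an actual decrease of the $f_i$ using the Clarke directional derivative. Recall $f_i^\circ(x;v) = \max\{\langle \xi, v\rangle : \xi \in \partial f_i(x)\}$; since $\partial f_i(x)$ is compact the max is attained and, by the bound above, $f_i^\circ(x;v) < 0$ for all $i \in \{1,\dots,k\}$. Because $f_i$ is locally Lipschitz, $f_i^\circ(x;v) = \limsup_{y\to x,\, t\downarrow 0} \frac{f_i(y+tv)-f_i(y)}{t}$, and in particular $\limsup_{t \downarrow 0} \frac{f_i(x+tv)-f_i(x)}{t} \le f_i^\circ(x;v) < 0$. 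Hence for each $i$ there is $\bar t_i > 0$ such that $f_i(x+tv) < f_i(x)$ for all $t \in (0,\bar t_i]$. Taking $\bar t := \min_i \bar t_i > 0$, the point $y := x + \bar t v$ satisfies $f_i(y) < f_i(x)$ for \emph{all} $i$, which contradicts the Pareto optimality of $x$ (it violates the definition even in the stronger "some $j$" clause, since it holds for every $j$). Therefore $0 \in C$, which is exactly \eqref{eq:KKT}.

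The main obstacle, and the step deserving the most care, is the passage from the separation inequality $\langle \xi, v\rangle < 0$ on $\partial f_i(x)$ to a genuine decrease of $f_i$ along $v$ — i.e.\ controlling $f_i(x+tv)-f_i(x)$ rather than the two-point $\limsup$ defining $f_i^\circ$. The clean way is to invoke the identity relating $f_i^\circ(x;v)$ to the support function of $\partial f_i(x)$ (Clarke, Prop.~2.1.2) together with $f_i'(x;v) := \limsup_{t\downarrow 0}\frac{f_i(x+tv)-f_i(x)}{t} \le f_i^\circ(x;v)$, which is immediate from the definitions; one should state this explicitly rather than conflate the one-sided and two-sided limits. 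A minor bookkeeping point is the finiteness of $k$, used both to ensure $\bigcup_i \partial f_i(x)$ is compact and to take $\bar t = \min_i \bar t_i > 0$; with infinitely many objectives the argument would need an additional uniformity assumption. Everything else — nonemptiness and compactness of the $\partial f_i(x)$, existence of the separating hyperplane — is standard and supplied by Lemma~\ref{lem:subdiff} and basic convex analysis.
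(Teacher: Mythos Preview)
The paper does not actually prove Theorem~\ref{thm:KKT}; it merely cites it from the literature (\cite{MEK2014}, Thm.~12), so there is no in-paper proof to compare against. Your argument is correct and is the standard one: compactness and convexity of $C = \conv\bigl(\bigcup_i \partial f_i(x)\bigr)$, strict separation of $0$ from $C$, the identity $f_i^\circ(x;v) = \max_{\xi \in \partial f_i(x)} \langle \xi, v\rangle$, and the inequality $\limsup_{t\downarrow 0}\frac{f_i(x+tv)-f_i(x)}{t} \le f_i^\circ(x;v)$ combine to give simultaneous strict decrease for small $t$, contradicting Pareto optimality.

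One remark in the spirit of the paper: instead of invoking an abstract separating hyperplane, you could obtain the direction $v$ constructively via Theorem~\ref{thm:steepest_descent_direction} (with $W = C$), which the paper proves and uses throughout. That theorem produces $\bar v = \argmin_{\xi \in -C}\|\xi\|^2 \neq 0$ whenever $0 \notin C$, together with the explicit bound $\langle \bar v, \xi\rangle \le -\|\bar v\|^2 < 0$ for all $\xi \in C$. This is exactly your separation inequality with a quantitative constant, and it ties the optimality condition directly to the descent mechanism the paper builds on. Either route is fine; yours is slightly more self-contained, while using Theorem~\ref{thm:steepest_descent_direction} would make the connection to the rest of the paper more transparent.
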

	
	In the smooth case, \eqref{eq:KKT} reduces to the classical multiobjective KKT conditions. Note that in contrast to the smooth case, the optimality condition \eqref{eq:KKT} is numerically challenging to work with, as subdifferentials are difficult to compute. Thus, in numerical methods, \eqref{eq:KKT} is only used implicitly.
	
	The method we are presenting in this paper is a \emph{descent method}, which means that, starting from a point $x_1 \in \R^n$, we want to generate a sequence $(x_j)_j \in \R^n$ in which each point is an improvement over the previous point. This is done by computing directions $v_j \in \R^n$ and step lengths $t_j \in \R^{>0}$ such that $x_{j+1} = x_j + t_j v_j$ and
	\begin{align*}
		f_i(x_{j+1}) < f_i(x_j) \quad \forall j \in \N, \ i \in \{1,...,k\}.
 	\end{align*}
	For the computation of $v_j$, we recall the following basic result from convex analysis that forms the theoretical foundation for descent methods in the presence of multiple (sub)gradients. Let $\| . \|$ be the Euclidean norm in $\R^n$.
	
	\begin{theorem} \label{thm:steepest_descent_direction}
		Let $W \subseteq \R^n$ be convex and compact and 
		\begin{align} \label{eq:min_norm_problem}
			\bar{v} := \argmin_{\xi \in -W} \| \xi \|^2.
		\end{align}		
		Then either $\bar{v} \neq 0$ and
		\begin{align} \label{eq:steepest_descent_ineq}
			\langle \bar{v}, \xi \rangle \leq - \| \bar{v} \|^2 < 0 \quad \forall \xi \in W,
		\end{align}				
		or $\bar{v} = 0$ and there is no $v \in \R^n$ with $\langle v, \xi \rangle < 0$ for all $\xi \in W$.
	\end{theorem}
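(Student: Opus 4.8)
The plan is to recognize $\bar v$ as the (unique) element of $-W$ closest to the origin, i.e.\ as minus the projection of $0$ onto $-W$, and to exploit the first‑order optimality condition (the variational inequality) that characterizes such a projection. As a preliminary step I would check that $\bar v$ in \eqref{eq:min_norm_problem} is well defined: since $W$ is compact so is $-W$, hence the continuous map $\xi \mapsto \|\xi\|^2$ attains its minimum on $-W$; and since $\|\cdot\|^2$ is strictly convex and $-W$ is convex, this minimizer is unique, which justifies the $\argmin$ notation. The two alternatives $\bar v \neq 0$ and $\bar v = 0$ are trivially exhaustive, so it remains to treat each.

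For the case $\bar v \neq 0$, I would first derive the variational inequality $\langle \bar v, \eta - \bar v \rangle \geq 0$ for all $\eta \in -W$. This uses the standard convexity trick: for $\eta \in -W$ and $t \in (0,1]$ the point $\bar v + t(\eta - \bar v)$ lies in $-W$ by convexity, so minimality gives $\|\bar v\|^2 \leq \|\bar v + t(\eta - \bar v)\|^2$; expanding the right‑hand side, cancelling $\|\bar v\|^2$, dividing by $t>0$, and letting $t \to 0^+$ yields the claim. Rewriting it as $\langle \bar v, \eta \rangle \geq \|\bar v\|^2$ for all $\eta \in -W$ and then substituting $\eta = -\xi$ for $\xi \in W$ gives $\langle \bar v, \xi \rangle \leq -\|\bar v\|^2$; since $\bar v \neq 0$ we have $-\|\bar v\|^2 < 0$, which is exactly \eqref{eq:steepest_descent_ineq}.

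For the case $\bar v = 0$: because $0$ is the minimum‑norm element of $-W$, we have $0 \in -W$, hence $0 \in W$. If some $v \in \R^n$ satisfied $\langle v, \xi \rangle < 0$ for all $\xi \in W$, then choosing $\xi = 0 \in W$ would give $0 < 0$, a contradiction; hence no such $v$ exists, which is the second alternative.

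The argument is essentially the classical projection/separation lemma, so I do not expect any genuinely deep step. The points that need care are the sign bookkeeping when passing between $W$ and $-W$ (it is $-W$ that is convex and onto which we project, while the conclusion is stated in terms of $W$), and the limit $t \to 0^+$ in the variational inequality, where one must divide by $t$ while $t > 0$ before passing to the limit.
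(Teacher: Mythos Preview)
Your proof is correct and follows essentially the same route as the paper: both identify $\bar v$ as the projection of the origin onto $-W$, invoke the variational inequality $\langle \bar v, \eta - \bar v\rangle \geq 0$ for all $\eta \in -W$ (the paper cites it, you derive it via the standard convexity perturbation), and handle the case $\bar v = 0$ by noting $0 \in W$. The only difference is cosmetic: the paper writes the inequality directly as $0 \leq \langle -\xi - \bar v, \bar v\rangle$ for $\xi \in W$, whereas you first state it for $\eta \in -W$ and then substitute $\eta = -\xi$.
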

	\begin{proof}
		Since $\bar{v}$ is the projection of the origin onto the closed and convex set $-W$, we have
		\begin{align*}
								  & 0 \leq \langle -\xi - \bar{v}, \bar{v} - 0 \rangle = -\langle \bar{v}, \xi \rangle - \| \bar{v} \|^2 \\
			\Leftrightarrow \quad & \langle \bar{v}, \xi \rangle \leq - \| \bar{v} \|^2
		\end{align*}
		for all $\xi \in W$ (cf.~\cite{CG1959}, Lem.). In particular, if $\bar{v} \neq 0$ then $\langle \bar{v}, \xi \rangle \leq - \| \bar{v} \|^2 < 0$.
		
		Conversely, $\bar{v} = 0$ implies $0 \in W$, so in this case there can not be any $v \in \R^n$ with $\langle v, \xi \rangle < 0$ for all $\xi \in W$.
	\end{proof}	
	
	Roughly speaking, Theorem \ref{thm:steepest_descent_direction} states that the element of minimal norm in the convex and compact set $-W$ is directionally opposed to all elements of $W$. To be more precise, $\bar{v}$ is contained in the intersection of all half-spaces induced by elements of $-W$. In the context of optimization, this result has several applications:
	\begin{itemize}
		\item[(i)] In the smooth, single-objective case, $W = \{ \nabla f(x) \}$ trivially yields the classical steepest descent method.
		\item[(ii)] In the smooth, multiobjective case, $W = \conv(\{ \nabla f_1(x), ..., \nabla f_k(x) \})$ yields the descent direction from \cite{FS2000} (after dualization) and \cite{SSW2002}.
		\item[(iii)] In the nonsmooth, single-objective case, $W = \partial f(x)$ yields the descent direction from \cite{C1983}, Prop.~6.2.4.
		\item[(iv)] In the nonsmooth, multiobjective case, $W = \conv \left( \bigcup_{i = 1}^k \partial f_i(x) \right)$ yields the descent direction from \cite{AGG2015}. 
	\end{itemize}
	
	In (i) and (ii), the solution of problem \eqref{eq:min_norm_problem} is straightforward, since $W$ is a convex polytope with the gradients as vertices. In (iii), the solution of \eqref{eq:min_norm_problem} is non-trivial due to the difficulty of computing the subdifferential. In subgradient methods \cite{S1985}, the solution is approximated by using a single subgradient instead of the entire subdifferential. As a result, it can not be guaranteed that the solution is a descent direction and in particular, \eqref{eq:min_norm_problem} can not be used as a stopping criterion. In gradient sampling methods \cite{BLO2005}, the subdifferential is approximated by the convex hull of gradients of the objective function in randomly sampled points around the current point. Due to the randomness, it can not be guaranteed that the resulting direction yields sufficient descent. Additionally, a check for differentiability of the objective is required, which can pose a problem \cite{HSS2016}. In (iv), the solution of \eqref{eq:min_norm_problem} gets even more complicated due to the presence of multiple subdifferentials. So far, the only methods that deal with \eqref{eq:min_norm_problem} in this case are multiobjective versions of the subgradient method \cite{B2013,NSFL2013}, which were reported unsuitable for real life applications.
	
	In the following section, we will describe a new way to compute descent directions for nonsmooth MOPs by systematically computing an approximation of $\conv \left( \cup_{i = 1}^k \partial f_i(x) \right)$ that is sufficient to obtain a "good enough" descent direction from \eqref{eq:min_norm_problem}.
	
	\section{Descent method for nonsmooth MOPs}	\label{sec:descent_method}
	
	In this section, we will present a method to compute descent directions of nonsmooth MOPs that generalizes the method from \cite{MY2012} to the multiobjective case. As described in the previous section, when computing descent directions via Theorem \ref{thm:steepest_descent_direction}, one has the problem of having to compute subdifferentials. Since these are difficult to come by in practice, we will instead replace $W$ in Theorem \ref{thm:steepest_descent_direction} by an approximation of $\conv \left( \cup_{i = 1}^k \partial f_i(x) \right)$ such that the resulting direction is guaranteed to have sufficient descent. To this end, we will first replace the Clarke subdifferential by the so-called $\varepsilon$-\emph{subdifferential}, and then take a finite approximation of the latter.
	
	\subsection{The epsilon-subdifferential}
	By definition, $\partial f_i(x)$ is the convex hull of the limits of the gradient of $f_i$ in all sequences near $x$ that converge to $x$. Thus, if we evaluate $\nabla f_i$ in a number of points close to $x$ (where it is defined) and take the convex hull, we expect the resulting set to be an approximation of $\partial f_i(x)$. To formalize this, we introduce the following definition \cite{G1977,K2010}.
	
	\begin{definition}
		Let $\varepsilon \geq 0$, $x \in \R^n$ and $B_\varepsilon(x) := \{ y \in \R^n : \| x - y \| \leq \varepsilon\}$. Then
		\begin{align*}
			\partial_\varepsilon f_i(x) := \conv \left( \bigcup_{y \in B_\varepsilon(x)} \partial f_i(y) \right)
		\end{align*}
		is the \emph{(Goldstein)} $\varepsilon$-\emph{subdifferential of} $f_i$ \emph{in} $x$. $\xi \in \partial_\varepsilon f_i(x)$ is an $\varepsilon$-\emph{subgradient}.
	\end{definition}
	
	Note that $\partial_0 f_i(x) = \partial f_i(x)$ and $\partial f_i(x) \subseteq \partial_\varepsilon f_i(x)$. For $\varepsilon \geq 0$ we define for the multiobjective setting
	\begin{align*}
		F_\varepsilon(x) := \conv \left( \bigcup_{i=1}^k \partial_\varepsilon f_i(x) \right).
	\end{align*}	
	To be able to choose $W = F_\varepsilon(x)$ in Theorem \ref{thm:steepest_descent_direction}, we first need to establish some properties of $F_\varepsilon(x)$.
	
	\begin{lemma}
		$\partial_\varepsilon f_i(x)$ is nonempty, convex and compact. In particular, the same holds for $F_\varepsilon(x)$.
	\end{lemma}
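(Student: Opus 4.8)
The plan is to treat the three properties in increasing order of difficulty and then transfer the conclusions to $F_\varepsilon(x)$. Nonemptiness and convexity are immediate: by Lemma \ref{lem:subdiff} each $\partial f_i(y)$ is nonempty, hence so is the union $\bigcup_{y \in B_\varepsilon(x)} \partial f_i(y)$ and therefore also its convex hull; and $\partial_\varepsilon f_i(x)$ is convex because it is defined as a convex hull. Applying these same two observations to the finite union $\bigcup_{i=1}^k \partial_\varepsilon f_i(x)$ shows that $F_\varepsilon(x)$ is nonempty and convex as well, so the real content of the lemma is compactness of $\partial_\varepsilon f_i(x)$.

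For compactness I would first show that the un-convexified set $S := \bigcup_{y \in B_\varepsilon(x)} \partial f_i(y)$ is bounded. Since $f_i$ is locally Lipschitz and $B_\varepsilon(x)$ is compact, a finite-subcover argument shows that the local Lipschitz constants of $f_i$ at points of $B_\varepsilon(x)$ admit a common upper bound $L > 0$; as $\|\xi\| \leq L$ for every $\xi \in \partial f_i(y)$ whenever $f_i$ is $L$-Lipschitz near $y$ (cf.~\cite{C1983}, Prop.~2.1.2), we conclude $\|\xi\| \leq L$ for all $\xi \in S$. Next I would show that $S$ is closed. This rests on the fact that the set-valued map $y \mapsto \partial f_i(y)$ is upper semicontinuous with compact values (again \cite{C1983}); the image of the compact set $B_\varepsilon(x)$ under such a map is compact, in particular closed. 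Hence $S$ is compact.

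It then remains to pass from $S$ to $\conv(S) = \partial_\varepsilon f_i(x)$. In $\R^n$ this preserves compactness: by Carathéodory's theorem $\conv(S)$ is the image of the compact set $\Delta_n \times S^{n+1}$ (the standard $n$-simplex times $n+1$ copies of $S$) under the continuous map $(\lambda, \xi_0, \dots, \xi_n) \mapsto \sum_{j=0}^n \lambda_j \xi_j$, so $\partial_\varepsilon f_i(x)$ is compact. Finally, for $F_\varepsilon(x)$: the finite union $\bigcup_{i=1}^k \partial_\varepsilon f_i(x)$ of compact sets is compact, and one more application of the convex-hull argument gives that $F_\varepsilon(x)$ is compact.

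The only delicate step is the compactness of $S$: it requires invoking uniform (local) Lipschitz continuity on the compact ball $B_\varepsilon(x)$ for boundedness, and the upper semicontinuity, i.e.\ the closed-graph property, of the Clarke subdifferential for closedness. Everything else — convexity, nonemptiness, and the fact that convex hulls of compact sets in finite dimensions are compact — is routine.
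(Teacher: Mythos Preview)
Your argument is correct and complete. However, it takes a genuinely different route from the paper: the paper does not prove the properties of $\partial_\varepsilon f_i(x)$ at all but simply cites \cite{G1977}, Prop.~2.3, and then remarks that the claim for $F_\varepsilon(x)$ follows directly from the definition. What you have written is essentially a self-contained reconstruction of Goldstein's original argument --- uniform Lipschitz bound on the compact ball for boundedness, upper semicontinuity of the Clarke subdifferential for closedness of the union, and Carath\'eodory's theorem to pass to the convex hull. The upside of your approach is that it makes the lemma independent of the external reference and exposes exactly which structural properties of the Clarke subdifferential are used; the paper's approach is more economical but treats the result as a black box. Both arrive at the statement for $F_\varepsilon(x)$ in the same way, namely by observing that a finite union of compact sets is compact and that the convex hull of a compact set in $\R^n$ is compact.
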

	\begin{proof}
		For $\partial_\varepsilon f_i(x)$, this was shown in \cite{G1977}, Prop.~2.3. For $F_\varepsilon(x)$, it then follows directly from the definition.
	\end{proof}
		
	We immediately get the following corollary from Theorems \ref{thm:KKT} and \ref{thm:steepest_descent_direction}.

	\begin{corollary}
		Let $\varepsilon \geq 0$. 
		\begin{itemize}
			\item[a)] If $x$ is Pareto optimal, then
				\begin{align} \label{eq:eps_critical}
					0 \in F_\varepsilon(x).
				\end{align}
			\item[b)] Let $x \in \R^n$ and
				\begin{align} \label{eq:eps_min_norm_problem}
					\bar{v} := \argmin_{\xi \in -F_\varepsilon(x)} \| \xi \|^2.
				\end{align}		
				Then either $\bar{v} \neq 0$ and
				\begin{align} \label{eq:eps_descent_ineq}
					\langle \bar{v}, \xi \rangle \leq - \| \bar{v} \|^2 < 0 \quad \forall \xi \in F_\varepsilon(x),
				\end{align}				
				or $\bar{v} = 0$ and there is no $v \in \R^n$ with $\langle v, \xi \rangle < 0$ for all $\xi \in F_\varepsilon(x)$.
			\end{itemize}
	\end{corollary}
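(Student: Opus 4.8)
The plan is to obtain both statements directly from the results already available, since the corollary is nothing more than the specialization of Theorems~\ref{thm:KKT} and~\ref{thm:steepest_descent_direction} to the particular convex compact set $W = F_\varepsilon(x)$.

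For part a), I would start from Theorem~\ref{thm:KKT}: if $x$ is Pareto optimal, then $0 \in \conv\left( \bigcup_{i=1}^k \partial f_i(x) \right) = F_0(x)$. Combining this with the inclusion $\partial f_i(x) \subseteq \partial_\varepsilon f_i(x)$ recorded right after the definition of the $\varepsilon$-subdifferential, which gives $F_0(x) \subseteq F_\varepsilon(x)$, we conclude $0 \in F_\varepsilon(x)$, i.e.\ \eqref{eq:eps_critical}. This is the entire argument for a).

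For part b), I would first recall that, by the preceding lemma, $F_\varepsilon(x)$ is nonempty, convex and compact, so it is an admissible choice for $W$ in Theorem~\ref{thm:steepest_descent_direction}. Applying that theorem with $W = F_\varepsilon(x)$ and with $\bar v$ as in \eqref{eq:eps_min_norm_problem} then yields verbatim the asserted dichotomy: either $\bar v \neq 0$, in which case \eqref{eq:eps_descent_ineq} holds, or $\bar v = 0$, in which case there is no $v \in \R^n$ with $\langle v, \xi \rangle < 0$ for all $\xi \in F_\varepsilon(x)$.

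There is no genuine obstacle here; the only two points worth stating explicitly are the set inclusion $F_0(x) \subseteq F_\varepsilon(x)$ used in a) and the observation that the convexity and compactness hypotheses of Theorem~\ref{thm:steepest_descent_direction} are met by $F_\varepsilon(x)$, both of which are immediate consequences of what has already been established.
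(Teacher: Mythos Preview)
Your proposal is correct and follows exactly the route the paper takes: the corollary is stated as an immediate consequence of Theorems~\ref{thm:KKT} and~\ref{thm:steepest_descent_direction}, using the inclusion $F_0(x) \subseteq F_\varepsilon(x)$ for part~a) and the convexity and compactness of $F_\varepsilon(x)$ from the preceding lemma for part~b). There is nothing to add.
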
	
	
	The previous corollary states that when working with the $\varepsilon$-subdifferential instead of the Clarke subdifferential, we still have a necessary optimality condition and a way to compute descent directions, although the optimality conditions are weaker and the descent direction has a less strong descent. This is illustrated in the following example.
	
	\begin{example} \label{exam:epssubdiff}
		Consider the locally Lipschitz function
		\begin{align*}
			f : \R^2 \rightarrow \R^2, \quad x \mapsto 
			\begin{pmatrix}
				(x_1 - 1)^2 + (x_2 - 1)^2 \\
				x_1^2 + | x_2 |
			\end{pmatrix}.
		\end{align*}
		The set of nondifferentiable points of $f$ is $\R \times \{ 0 \}$. For $\varepsilon > 0$ and $x \in \R^2$ we have
		\begin{align*}
			\nabla f_1(x) = 
			\begin{pmatrix}
				2(x_1 - 1) \\
				2(x_2 - 1)
			\end{pmatrix}
			\quad \text{and} \quad
			\partial_\varepsilon f_1(x) =  2 B_\varepsilon(x) - 
			\begin{pmatrix}
				2 \\
				2
			\end{pmatrix}.
		\end{align*}
		For $x \in \R \times \{ 0 \}$ we have
		\begin{align*}
			\partial f_2(x) = \{ 2 x_1 \} \times [-1,1] \quad \text{and} \quad \partial_\varepsilon f_2(x) = \{ 2 x_1 + [-2\varepsilon,2\varepsilon] \} \times [-1,1].
		\end{align*}
		
		Figure \ref{fig:example_epssubdiff_descent} shows the Clarke subdifferential (a), the $\varepsilon$-subdifferential (b) for $\varepsilon = 0.2$ and the corresponding sets $F_\varepsilon(x)$ for $x = (1.5, 0)^\top$. 
		\begin{figure}[ht] 
			\parbox[b]{0.49\textwidth}{
				\centering 
				\includegraphics[width=0.45\textwidth]{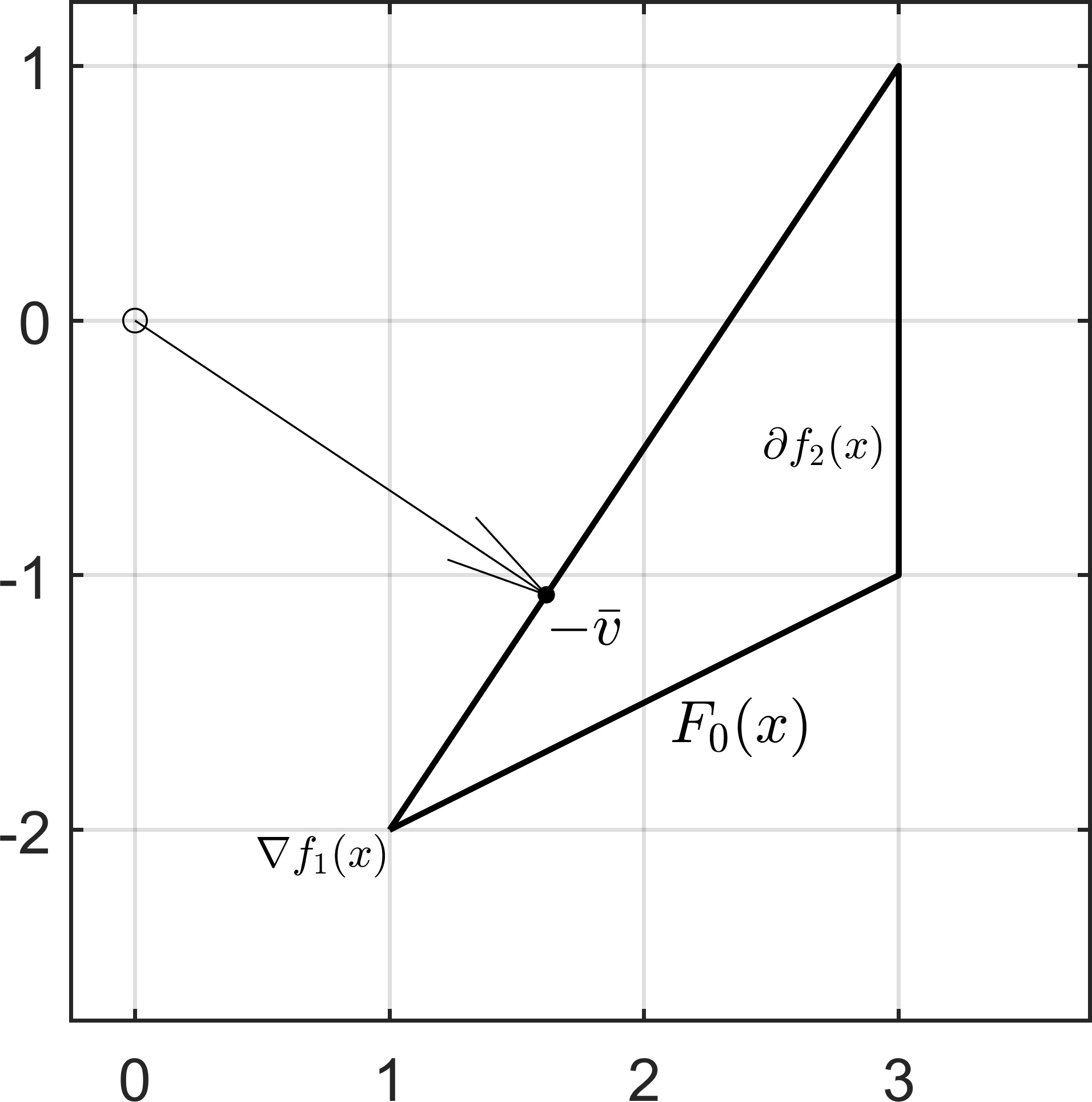}\\
				\textbf{(a)}
			}
			\parbox[b]{0.49\textwidth}{
				\centering 
				\includegraphics[width=0.45\textwidth]{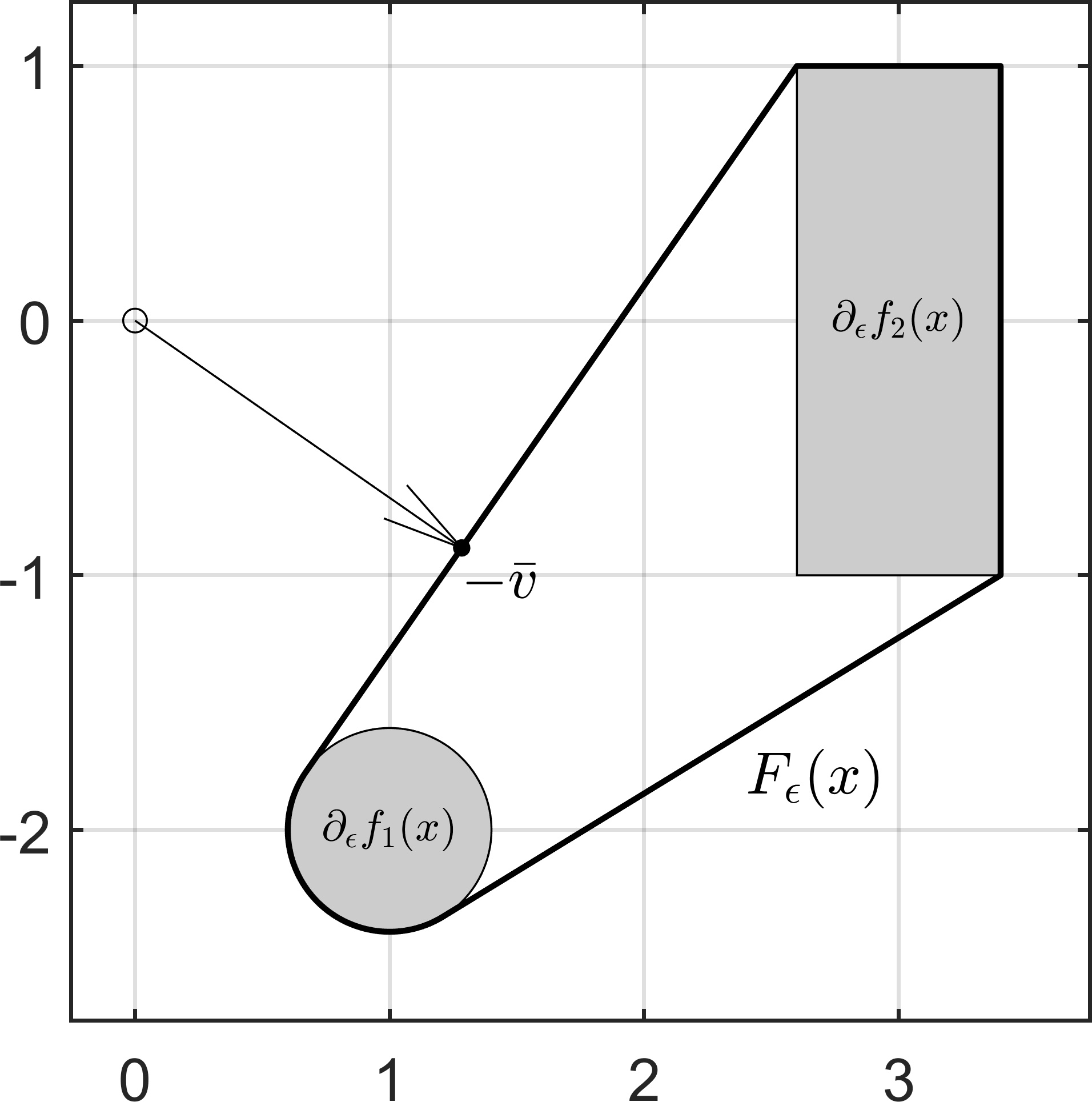}\\
				\textbf{(b)} 
			}
			\caption{Clarke subdifferentials (a), $\varepsilon$-subdifferentials (b) for $\varepsilon = 0.2$ and the corresponding sets $F_\varepsilon(x)$ for $x = (1.5, 0)^\top$ in Example \ref{exam:epssubdiff}.}
			\label{fig:example_epssubdiff_descent}
		\end{figure}
		Additionally, the corresponding solutions of \eqref{eq:eps_min_norm_problem} are shown. In this case, the predicted descent $-\| \bar{v} \|^2$ (cf.~\eqref{eq:steepest_descent_ineq}) is approximately $-3.7692$ in (a) and $-2.4433$ in (b). 
		
		Figure \ref{fig:example_epssubdiff_critical} shows the same scenario for $x = (0.5,0)^\top$.
		\begin{figure}[ht] 
			\parbox[b]{0.49\textwidth}{
				\centering 
				\includegraphics[width=0.45\textwidth]{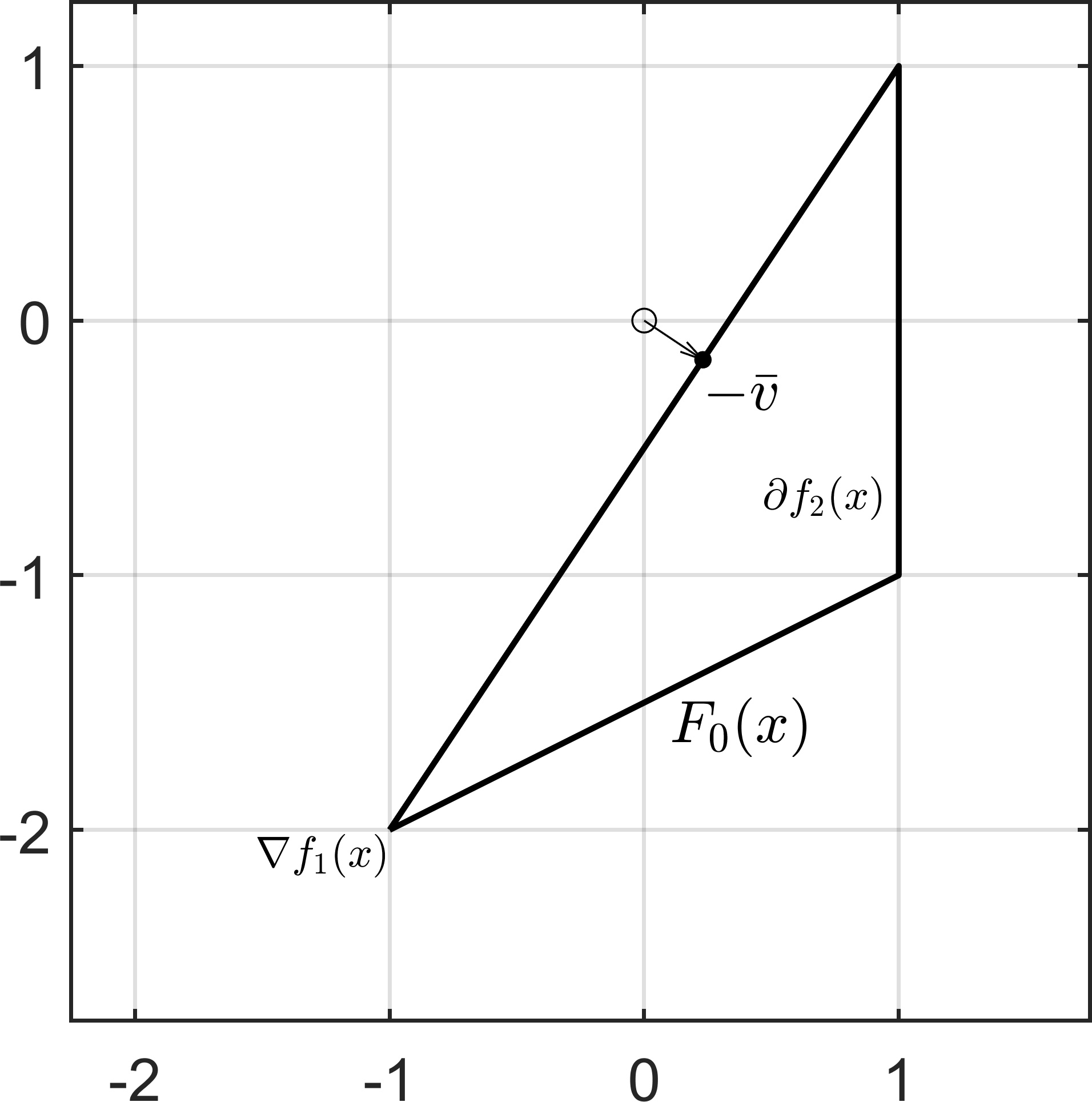}\\
				\textbf{(a)}
			}
			\parbox[b]{0.49\textwidth}{
				\centering 
				\includegraphics[width=0.45\textwidth]{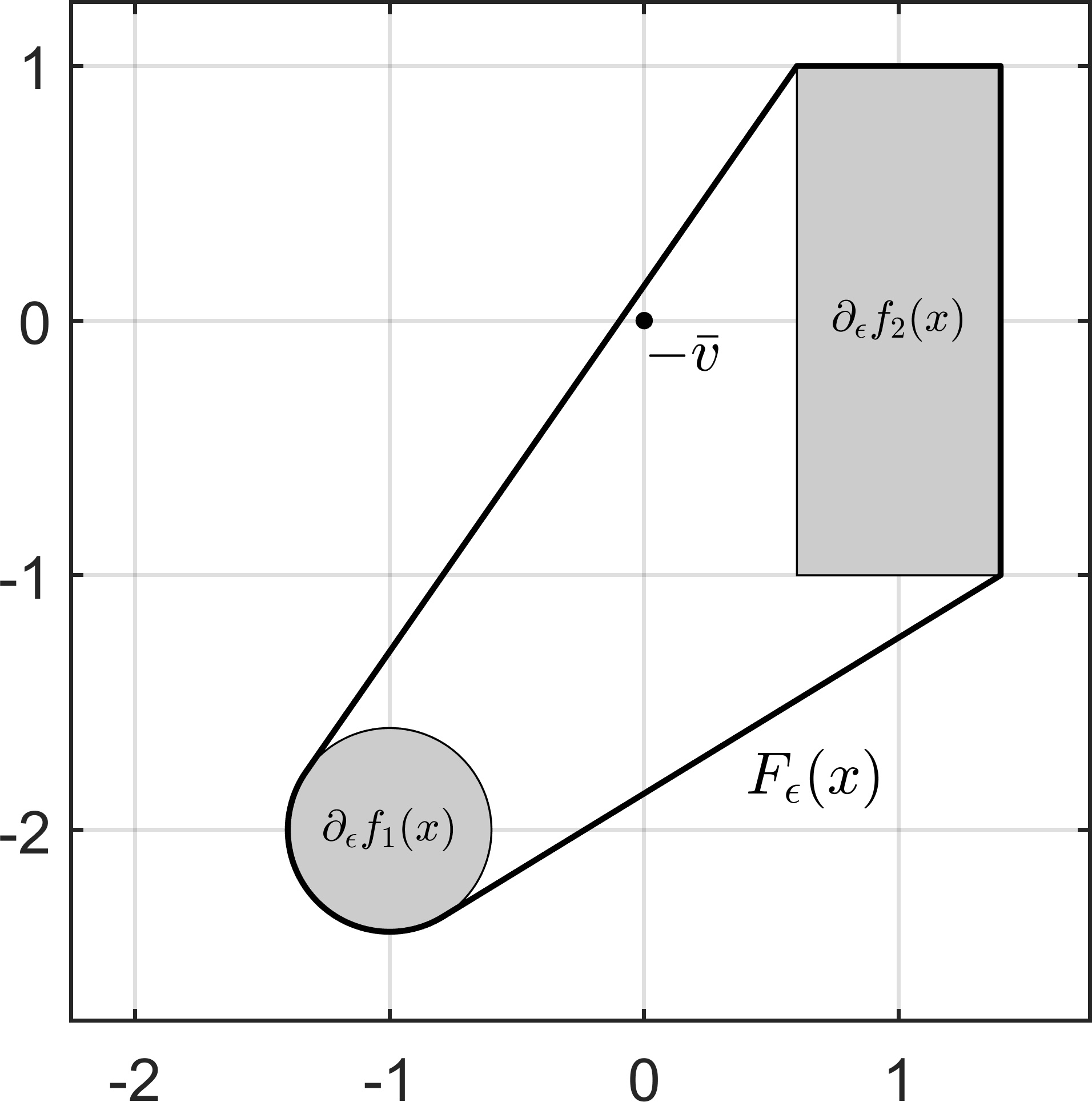}\\
				\textbf{(b)} 
			}
			\caption{Clarke subdifferentials (a), $\varepsilon$-subdifferentials (b) for $\varepsilon = 0.2$ and the corresponding sets $F_\varepsilon(x)$ for $x = (0.5, 0)^\top$ in Example \ref{exam:epssubdiff}.}
			\label{fig:example_epssubdiff_critical}
		\end{figure}
		Here, the Clarke subdifferential still yields a descent, while $\bar{v} = 0$ for the $\varepsilon$-subdifferential. In other words, $x$ satisfies the necessary optimality condition \eqref{eq:eps_critical} but not \eqref{eq:KKT}.
	\end{example}
	
	The following lemma shows that for the direction from \eqref{eq:eps_min_norm_problem}, there is a lower bound for a step size up to which we have guaranteed descent in each objective function $f_i$.
		
	\begin{lemma} \label{lem:step_size_bound}
		Let $\varepsilon \geq 0$ and $\bar{v}$ be the solution of \eqref{eq:eps_min_norm_problem}. Then
		\begin{align*}
			f_i(x + t \bar{v}) \leq f_i(x) - t \| \bar{v} \|^2 \quad \forall t \leq \frac{\varepsilon}{\| \bar{v} \|}.
		\end{align*}
	\end{lemma}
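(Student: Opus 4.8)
The plan is to reduce the claim to Lebourg's mean value theorem for locally Lipschitz functions, combined with the descent inequality \eqref{eq:eps_descent_ineq} from the preceding corollary. First I would dispose of the degenerate cases: if $\bar{v} = 0$, then $x + t\bar{v} = x$ and the asserted inequality reads $f_i(x) \leq f_i(x)$, which holds trivially (and the bound $\varepsilon/\|\bar{v}\|$ is vacuous); likewise, if $\varepsilon = 0$ the only admissible step size is $t = 0$. Hence I may assume $\varepsilon > 0$ and $\bar{v} \neq 0$.

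Next, fix $i \in \{1,\dots,k\}$ and $t$ with $0 < t \leq \varepsilon/\|\bar{v}\|$. By the nonsmooth mean value theorem (cf.~\cite{C1983}, Thm.~2.3.7), there exist $s \in (0,t)$ and $\xi \in \partial f_i(x + s\bar{v})$ with
\begin{align*}
	f_i(x + t\bar{v}) - f_i(x) = t \langle \xi, \bar{v} \rangle.
\end{align*}
The key step is to verify that $\xi \in F_\varepsilon(x)$, so that \eqref{eq:eps_descent_ineq} becomes applicable. Since $s < t \leq \varepsilon/\|\bar{v}\|$, we have $\|(x + s\bar{v}) - x\| = s\|\bar{v}\| < \varepsilon$, so $x + s\bar{v} \in B_\varepsilon(x)$; by definition of the $\varepsilon$-subdifferential and of $F_\varepsilon$, this gives $\partial f_i(x + s\bar{v}) \subseteq \partial_\varepsilon f_i(x) \subseteq F_\varepsilon(x)$, hence $\xi \in F_\varepsilon(x)$.

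Finally, applying \eqref{eq:eps_descent_ineq} to this $\xi$ yields $\langle \bar{v}, \xi \rangle \leq -\|\bar{v}\|^2$, and substituting into the mean value identity gives
\begin{align*}
	f_i(x + t\bar{v}) - f_i(x) = t \langle \xi, \bar{v} \rangle \leq -t\|\bar{v}\|^2,
\end{align*}
which is the claimed estimate. I expect the only genuine subtlety to be the correct invocation of the nonsmooth mean value theorem together with the observation that the intermediate point $x + s\bar{v}$ still lies in the closed ball $B_\varepsilon(x)$ — this is precisely where the restriction $t \leq \varepsilon/\|\bar{v}\|$ is used; the remaining arguments are routine bookkeeping with the inclusions $\partial f_i(y) \subseteq \partial_\varepsilon f_i(x) \subseteq F_\varepsilon(x)$ for $y \in B_\varepsilon(x)$.
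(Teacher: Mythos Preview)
Your proof is correct and follows essentially the same route as the paper: apply Lebourg's mean value theorem to obtain a subgradient $\xi \in \partial f_i(x + s\bar v)$ with $s \in (0,t)$, observe that $s\|\bar v\| < \varepsilon$ forces $\xi \in \partial_\varepsilon f_i(x) \subseteq F_\varepsilon(x)$, and then invoke \eqref{eq:eps_descent_ineq}. The only difference is that you handle the degenerate cases $\bar v = 0$ and $\varepsilon = 0$ explicitly, which the paper omits.
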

	\begin{proof}
		Let $t \leq \frac{\varepsilon}{\| \bar{v} \|}$. Since $f_i$ is locally Lipschitz continuous on $\R^n$, it is in particular Lipschitz continuous on an open set containing $x + [0,t] \bar{v}$. By applying the mean value theorem (cf.~\cite{C1983}, Thm.~2.3.7), we obtain
		\begin{align*}
			f_i(x + t \bar{v}) - f_i(x) \in \langle \partial f_i(x + r \bar{v}), t \bar{v} \rangle
		\end{align*}			
		for some $r \in (0,t)$. Since $\| x - (x + r \bar{v}) \| = r \| \bar{v} \| < \varepsilon$ we have $\partial f_i(x + r \bar{v}) \subseteq \partial_\varepsilon f_i(x)$. This means that there is some $\xi \in \partial_\varepsilon f_i(x)$ such that 
		\begin{align*}
			f_i(x + t \bar{v}) - f_i(x) = t \langle \xi, \bar{v} \rangle.
		\end{align*}			
		Combined with \eqref{eq:eps_descent_ineq} we obtain
		\begin{align*}
			& f_i(x + t \bar{v}) - f_i(x) \leq - t \| \bar{v} \|^2 \\
			\Leftrightarrow \quad & f_i(x + t \bar{v}) \leq f_i(x) - t \| \bar{v} \|^2.
		\end{align*}
	\end{proof}
	
	In the following, we will describe how we can obtain a good approximation of \eqref{eq:eps_min_norm_problem} without requiring full knowledge of the $\varepsilon$-subdifferentials. 	
	
	\subsection{Efficient computation of descent directions}	
	
	In this part, we will describe how the solution of \eqref{eq:eps_min_norm_problem} can be approximated when only a single subgradient can be computed at every $x \in \R^n$. Similar to the gradient sampling approach, the idea behind our method is to replace $F_\varepsilon(x)$ in \eqref{eq:eps_min_norm_problem} by the convex hull of a finite number of $\varepsilon$-subgradients $\xi_1,...,\xi_m \in F_\varepsilon(x)$, $m \in \N$. Since it is impossible to know a priori how many and which $\varepsilon$-subgradients are required to obtain a good descent direction, we solve \eqref{eq:eps_min_norm_problem} multiple times in an iterative approach to enrich our approximation until a satisfying direction has been found. To this end, we have to specify how to enrich our current approximation $\conv(\{ \xi_1, ..., \xi_m \})$ and how to characterize an acceptable descent direction.
	
	Let $W = \{\xi_1, ..., \xi_m\} \subseteq F_\varepsilon(x)$ and 
	\begin{align} \label{eq:approx_desc_dir}
		\tilde{v} := \argmin_{v \in -\conv(W)} \| v \|^2.
	\end{align}
	Let $c \in (0,1)$. Motivated by Lemma \ref{lem:step_size_bound}, we regard $\tilde{v}$ as an \emph{acceptable} descent direction, if
	\begin{align} \label{eq:accept_direction}
		f_i \left( x + \frac{\varepsilon}{\| \tilde{v} \|} \tilde{v} \right) \leq f_i(x) - c \varepsilon \| \tilde{v} \| \quad \forall i \in \{1,...,k\}.
	\end{align}	
	If the set $I \subseteq \{1,...,k\}$ for which \eqref{eq:accept_direction} is violated is non-empty	then we have to find a new $\varepsilon$-subgradient $\xi' \in F_\varepsilon(x)$ such that $W \cup \{ \xi' \}$ yields a better descent direction. Intuitively, \eqref{eq:accept_direction} being violated means that the local behavior of $f_i$, $i \in I$, in $x$ in the direction $\tilde{v}$ is not sufficiently captured in $W$. Thus, for each $i \in I$, we expect that there exists some $t' \in (0,\frac{\varepsilon}{\| \tilde{v} \|}]$ such that $\xi' \in \partial f_i(x + t' \tilde{v})$ improves the approximation of $F_\varepsilon(x)$. This is proven in the following lemma.
	
	\begin{lemma} \label{lem:infeasible_direction}
		Let $c \in (0,1)$, $W = \{ \xi_1, ..., \xi_m \} \subseteq F_\varepsilon(x)$ and $\tilde{v}$ be the solution of \eqref{eq:approx_desc_dir}. If
		\begin{align*}
			f_i \left( x + \frac{\varepsilon}{\| \tilde{v} \|} \tilde{v} \right) > f_i(x) - c \varepsilon \| \tilde{v} \|,
		\end{align*}
		then there is some $t' \in (0,\frac{\varepsilon}{\| \tilde{v} \|}]$ and $\xi' \in \partial f_i(x + t' \tilde{v})$ such that
		\begin{align} \label{eq:new_subgrad_condition}
			\langle \tilde{v}, \xi' \rangle > - c \| \tilde{v} \|^2.
		\end{align}
		In particular, $\xi' \in F_\varepsilon(x) \setminus \conv(W)$.
	\end{lemma}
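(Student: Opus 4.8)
The plan is to reduce everything to a one-dimensional mean value argument along the ray spanned by $\tilde v$, exactly as in the proof of Lemma~\ref{lem:step_size_bound}, and then to play the resulting inequality off against the projection property of $\tilde v$ supplied by Theorem~\ref{thm:steepest_descent_direction}. First I would note that the hypothesis tacitly forces $\tilde v \neq 0$ and $\varepsilon > 0$ (otherwise the displayed inequality degenerates to $f_i(x) > f_i(x)$), so that $t^\ast := \varepsilon / \| \tilde v \|$ is a well-defined positive number satisfying $t^\ast \| \tilde v \|^2 = \varepsilon \| \tilde v \|$.

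Next, since $f_i$ is locally Lipschitz it is Lipschitz on some open set containing the compact segment $x + [0,t^\ast] \tilde v$, so Lebourg's mean value theorem (cf.~\cite{C1983}, Thm.~2.3.7) provides a $t' \in (0, t^\ast)$ and a $\xi' \in \partial f_i(x + t' \tilde v)$ with $f_i(x + t^\ast \tilde v) - f_i(x) = t^\ast \langle \xi', \tilde v \rangle$. Feeding the hypothesis into this identity gives $t^\ast \langle \xi', \tilde v \rangle > -c \varepsilon \| \tilde v \| = -c\, t^\ast \| \tilde v \|^2$, and dividing by $t^\ast > 0$ yields \eqref{eq:new_subgrad_condition}. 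For the membership claims, $t' \le t^\ast$ gives $\| (x + t' \tilde v) - x \| = t' \| \tilde v \| \le \varepsilon$, hence $x + t' \tilde v \in B_\varepsilon(x)$ and therefore $\xi' \in \partial f_i(x + t' \tilde v) \subseteq \partial_\varepsilon f_i(x) \subseteq F_\varepsilon(x)$. To see that $\xi' \notin \conv(W)$, I would apply Theorem~\ref{thm:steepest_descent_direction} with the convex, compact set $\conv(W)$ in place of $W$: since $\tilde v$ is the minimal-norm element of $-\conv(W)$, the projection inequality gives $\langle \tilde v, \xi \rangle \le -\| \tilde v \|^2$ for every $\xi \in \conv(W)$, whereas $c < 1$ and $\| \tilde v \| > 0$ force $\langle \tilde v, \xi' \rangle > -c \| \tilde v \|^2 > -\| \tilde v \|^2$; these are incompatible, so $\xi' \in F_\varepsilon(x) \setminus \conv(W)$.

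There is no real obstacle here — the argument is essentially a one-line computation once the mean value theorem is invoked. The only points requiring a little care are the bookkeeping that makes $t^\ast$ meaningful (so one should dispose of the degenerate cases $\tilde v = 0$ and $\varepsilon = 0$ at the outset), the verification that the intermediate point $x + t' \tilde v$ still lies in the closed ball $B_\varepsilon(x)$ (which is precisely where the choice of step length $\varepsilon / \| \tilde v \|$ in \eqref{eq:accept_direction} is used), and the use of the non-strict form of the inequality from Theorem~\ref{thm:steepest_descent_direction} — i.e.\ the projection inequality appearing in its proof — since we must compare against $\conv(W)$ rather than the abstract set $W$.
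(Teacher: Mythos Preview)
Your argument is correct and follows essentially the same route as the paper: apply Lebourg's mean value theorem along the segment $x + [0,\varepsilon/\|\tilde v\|]\,\tilde v$, combine the resulting identity with the hypothesis to obtain \eqref{eq:new_subgrad_condition}, and then invoke the projection inequality from Theorem~\ref{thm:steepest_descent_direction} to conclude $\xi' \notin \conv(W)$. The only cosmetic difference is that the paper phrases the first part as a proof by contradiction (assuming $\langle \tilde v,\xi'\rangle \le -c\|\tilde v\|^2$ for all admissible $t',\xi'$ and deriving the negation of the hypothesis), whereas you argue directly; your version is slightly more explicit about the degenerate cases and about spelling out why $\xi'$ lies in $F_\varepsilon(x)$ but outside $\conv(W)$.
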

	\begin{proof}
		Assume that for all $t' \in (0,\frac{\varepsilon}{\| \tilde{v} \|}]$ and $\xi' \in \partial f_i(x + t' \tilde{v})$ we have
		\begin{align} \label{eq:c_descent_contr}
			\langle \tilde{v}, \xi' \rangle \leq - c \| \tilde{v} \|^2.
		\end{align}
		By applying the mean value theorem as in Lemma \ref{lem:step_size_bound}, we obtain
		\begin{align*}
			f_i \left( x + \frac{\varepsilon}{\| \tilde{v} \|} \tilde{v} \right) - f_i(x) \in \langle \partial f_i(x + r \tilde{v}), \frac{\varepsilon}{\| \tilde{v} \|} \tilde{v} \rangle
		\end{align*}
		for some $r \in (0,\frac{\varepsilon}{\| \tilde{v} \|})$. This means that there is some $\xi' \in \partial f_i(x + r \tilde{v})$ such that
		\begin{align*}
			f_i \left( x + \frac{\varepsilon}{\| \tilde{v} \|} \tilde{v} \right) - f_i(x) = \langle \xi', \frac{\varepsilon}{\| \tilde{v} \|} \tilde{v} \rangle = \frac{\varepsilon}{\| \tilde{v} \|} \langle \xi',  \tilde{v} \rangle.
		\end{align*}
		By \eqref{eq:c_descent_contr} it follows that
		\begin{align*}
			& f_i \left(x + \frac{\varepsilon}{\| \tilde{v} \|} \tilde{v} \right) - f_i(x) \leq -c \varepsilon \| \tilde{v} \| \\
			\Leftrightarrow \quad & f_i \left(x + \frac{\varepsilon}{\| \tilde{v} \|} \tilde{v} \right) \leq f_i(x) - c \varepsilon \| \tilde{v} \|,
		\end{align*} 
		which is a contradiction. In particular, \eqref{eq:steepest_descent_ineq} yields $\xi' \in F_\varepsilon(x) \setminus \conv(W)$.
	\end{proof}	
	
	The following example visualizes the previous lemma.
	\begin{example} \label{exam:bad_descent}
		Consider $f$ as in Example \ref{exam:epssubdiff}, $\varepsilon = 0.2$ and $x = (0.75, 0)^\top$. The dashed lines in Figure \ref{fig:example_epssubdiff_false_descent} show the $\varepsilon$-subdifferentials, $F_\varepsilon(x)$ and the resulting descent direction (cf.~Figure \ref{fig:example_epssubdiff_descent} and \ref{fig:example_epssubdiff_critical}).
		\begin{figure}[ht] 
			\parbox[b]{0.49\textwidth}{
				\centering 
				\includegraphics[width=0.45\textwidth]{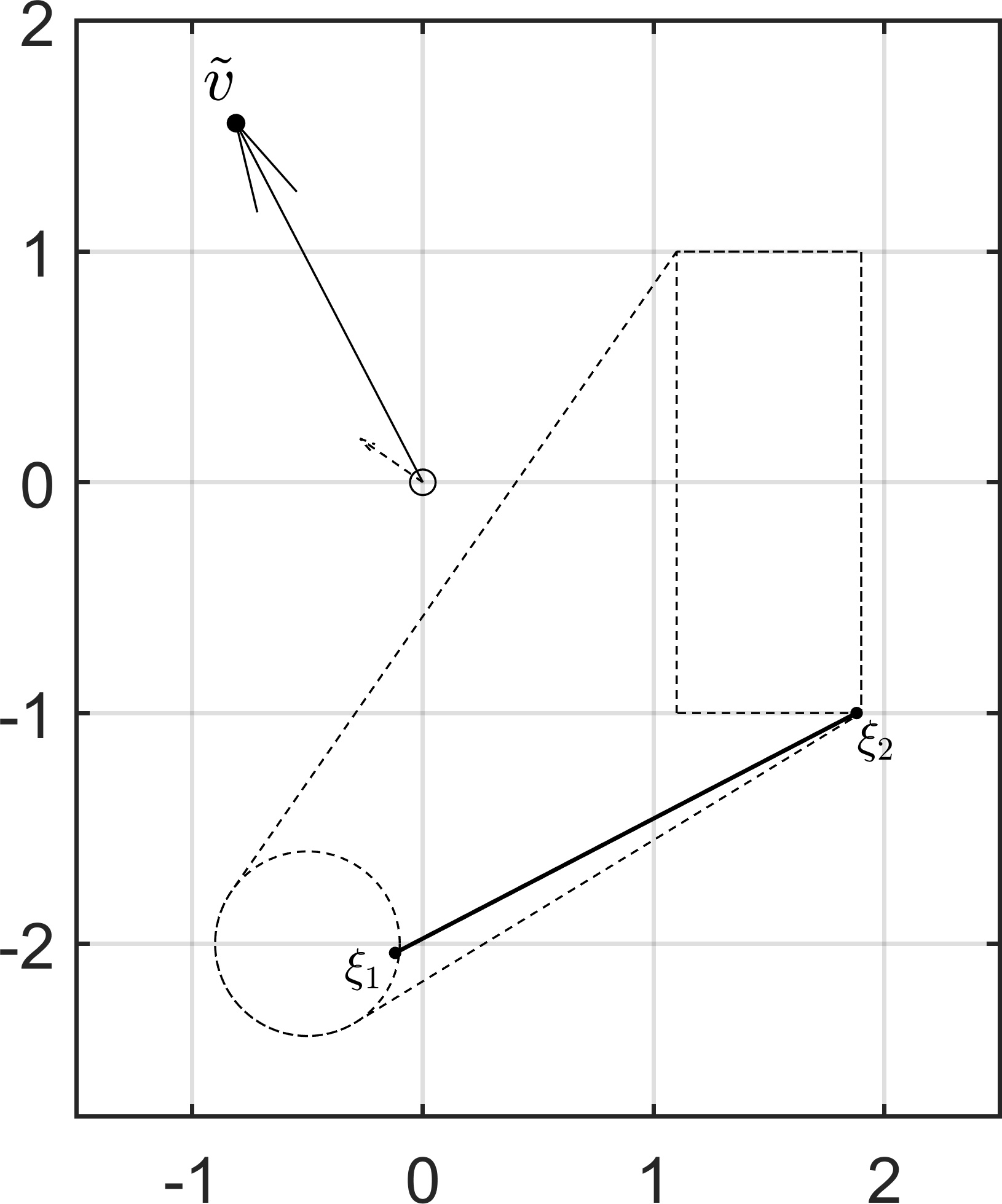}\\
				\textbf{(a)}
			}
			\parbox[b]{0.49\textwidth}{
				\centering 
				\includegraphics[width=0.45\textwidth]{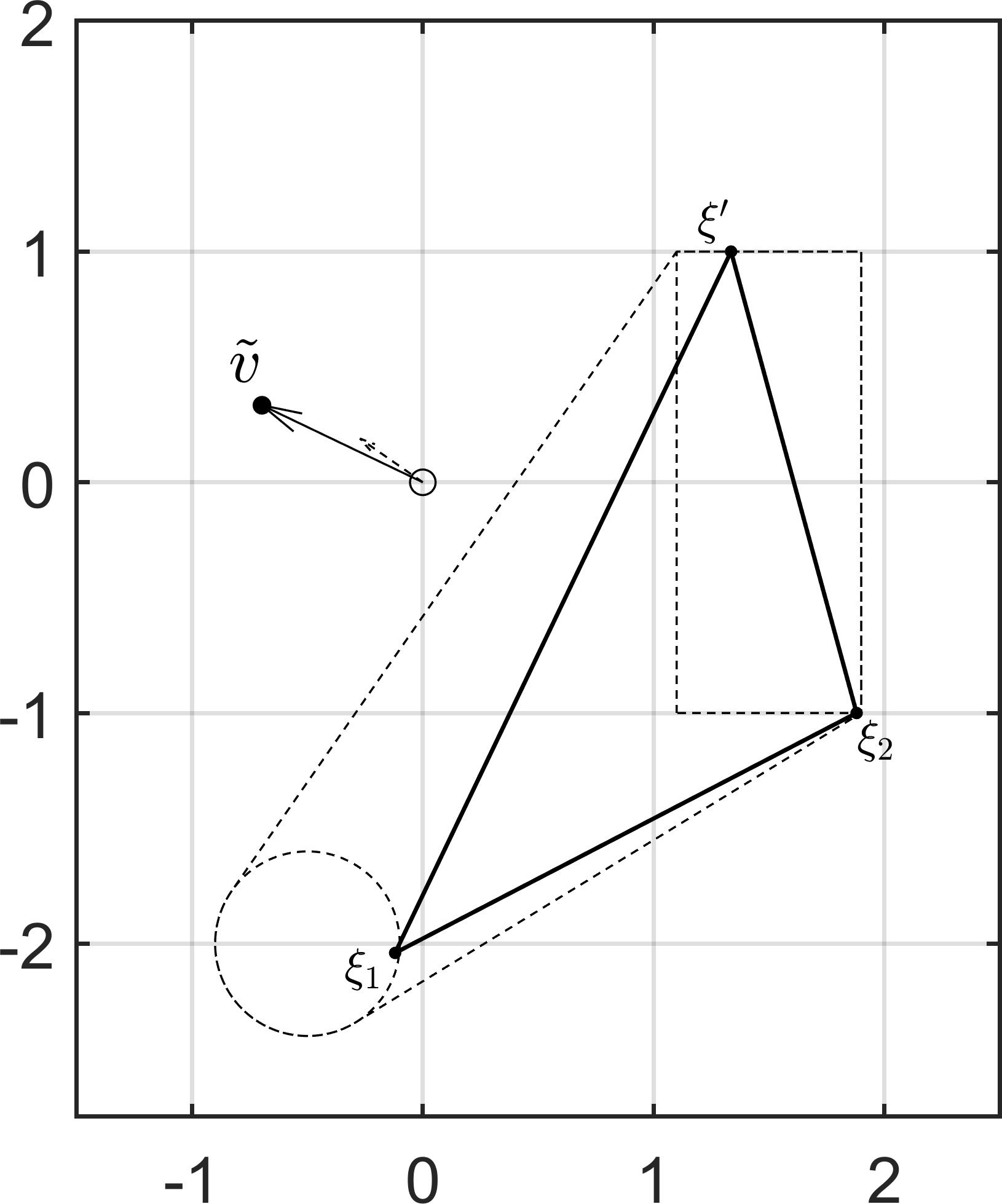}\\
				\textbf{(b)} 
			}
			\caption{Approximations of $F_\varepsilon(x)$ for $\varepsilon = 0.2$ and $x = (0.75, 0)^\top$ in Example \ref{exam:bad_descent}. $F_\epsilon(x)$ is approximated by $\conv(\{\xi_1, \xi_2\})$ in (a) and by $\conv(\{\xi_1, \xi_2, \xi' \})$ in (b).}
			\label{fig:example_epssubdiff_false_descent}
		\end{figure}
		Let $y = (0.94,-0.02)^\top$. Then $\| x - y \| \approx 0.191 \leq \varepsilon$, so $y \in B_\varepsilon(x)$ and
		\begin{align*}
			\partial_\varepsilon f_1(x) &\supseteq \partial f_1(y) = \left\{  
			\begin{pmatrix}
				-0.12 \\
				-2.04
			\end{pmatrix}						
			\right\}
			=: \{ \xi_1 \},\\
			\partial_\varepsilon f_2(x) &\supseteq \partial f_2(y) = \left\{  
			\begin{pmatrix}
				1.88 \\
				-1
			\end{pmatrix}						
			\right\}
			=: \{ \xi_2 \}.
		\end{align*}				
		Let $W := \{ \xi_1, \xi_2 \}$ and $\conv(W)$ be the approximation of $F_\varepsilon(x)$, shown as the solid line in Figure \ref{fig:example_epssubdiff_false_descent}(a). Let $\tilde{v}$ be the solution of \eqref{eq:approx_desc_dir} for this $W$ and choose $c = 0.25$. Checking \eqref{eq:accept_direction}, we have
		\begin{align*} 
			f_2 \left( x + \frac{\varepsilon}{\| \tilde{v} \|} \tilde{v} \right) &\approx 0.6101 > 0.4748 \approx f_2(x) - c \varepsilon \| \tilde{v} \|.
		\end{align*}
		By Lemma \ref{lem:infeasible_direction}, this means that there is some $t' \in (0, \frac{\varepsilon}{\| \tilde{v} \|}]$ and $\xi' \in \partial f_2(x + t' \tilde{v})$ such that
		\begin{align*}
			\langle \tilde{v}, \xi' \rangle > -c \| \tilde{v} \|^2.
		\end{align*}
		In this case, we can take for example $t' = \frac{1}{2} \frac{\varepsilon}{\| \tilde{v} \|}$, resulting in
		\begin{align*}
			\partial f_2(x + t'v) \approx \left\{
			\begin{pmatrix}
				1.4077 \\
				1
			\end{pmatrix}
			\right\} =: \{ \xi' \}, \\
			\langle \tilde{v}, \xi' \rangle \approx 0.4172 > -0.7696 \approx -c \| \tilde{v} \|^2.
		\end{align*}
		Figure \ref{fig:example_epssubdiff_false_descent}(b) shows the improved approximation $W \cup \{ \xi' \}$ and the resulting descent direction $\tilde{v}$. By checking \eqref{eq:accept_direction} for this new descent direction, we see that $\tilde{v}$ is acceptable. (Note that in general, a single improvement step like this will not be sufficient to reach an acceptable direction.)
	\end{example}	
	
	Note that Lemma \ref{lem:infeasible_direction} only shows the existence of $t'$ and $\xi'$ without stating a way how to actually compute them. To this end, let $i$ be the index of an objective function for which \eqref{eq:accept_direction} is not satisfied, define
	\begin{align} \label{eq:def_h}
		h_i : \R \rightarrow \R, \quad t \mapsto f_i(x + t \tilde{v}) - f_i(x) + c t \| \tilde{v} \|^2
	\end{align}
	(cf.~\cite{MY2012}) and consider Algorithm \ref{algo:new_subgradient}. If $f_i$ is continuously differentiable around $x$, then \eqref{eq:new_subgrad_condition} is equivalent to $h_i'(t') > 0$, i.e., $h_i$ being monotonically increasing around $t'$. Thus, the idea of Algorithm \ref{algo:new_subgradient} is to find some $t$ such that $h_i$ is monotonically increasing around $t$, while checking if \eqref{eq:new_subgrad_condition} is satisfied for a subgradient $\xi \in f_i(x + t \tilde{v})$.
	
	\begin{algorithm} 
		\caption{Compute new subgradient}
		\label{algo:new_subgradient}
		\begin{algorithmic}[1] 
			\REQUIRE Current point $x \in \R^n$, direction $\tilde{v}$, tolerance $\varepsilon$, Armijo parameter $c \in (0,1)$.
			\STATE Set $a = 0$, $b = \frac{\varepsilon}{\| \tilde{v} \|}$ and $t = \frac{a+b}{2}$.
			\STATE Compute $\xi' \in \partial f_i(x + t \tilde{v})$.
			\STATE If $\langle \tilde{v}, \xi' \rangle > - c \| \tilde{v} \|^2$ then stop.
			\STATE If $h_i(b) > h_i(t)$ then set $a = t$. Otherwise set $b = t$.
			\STATE Set $t = \frac{a+b}{2}$ and go to step 2.
		\end{algorithmic} 
	\end{algorithm}		
	
	Although in the general setting, we can not guarantee that Algorithm \ref{algo:new_subgradient} yields a subgradient satisfying \eqref{eq:new_subgrad_condition}, we can at least show that after finitely many iterations, a factor $t$ is found such that $\partial f_i(x + t \tilde{v})$ contains a subgradient that satisfies \eqref{eq:new_subgrad_condition}.
		
	\begin{lemma} \label{lem:conv_new_subgradient}
		Let $(t_k)_k$ be the sequence generated in Algorithm \ref{algo:new_subgradient}. If $(t_k)_k$ is finite, then some $\xi'$ was found such that \eqref{eq:new_subgrad_condition} is satisfied. If $(t_k)_k$ is infinite, then it converges to some $\bar{t} \in [0,\frac{\varepsilon}{\| \tilde{v} \|}]$ such that there is some $\xi' \in \partial f_i(x + \bar{t} \tilde{v})$ which satisfies \eqref{eq:new_subgrad_condition}. Additionally, there is some $N \in \N$ such that for all $k > N$ there is some $\xi' \in \partial f_i(x + t_k \tilde{v})$ satisfying \eqref{eq:new_subgrad_condition}.
	\end{lemma}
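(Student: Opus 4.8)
The finite case is essentially built into Algorithm~\ref{algo:new_subgradient}: the loop is left only through step~3, and step~3 is reached exactly when a subgradient $\xi'$ with $\langle \tilde{v}, \xi' \rangle > -c\|\tilde{v}\|^2$, i.e.\ satisfying \eqref{eq:new_subgrad_condition}, has been produced in step~2; so there is nothing to do here beyond unwinding the loop.

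For the infinite case I would first record the geometry of the generated sequences. Since step~4 always keeps one of the two halves of the current interval $[a,b]$, after $k$ iterations the interval has length $2^{-k}\varepsilon/\|\tilde{v}\|$; its left endpoints $a_k$ form a nondecreasing sequence, its right endpoints $b_k$ a nonincreasing one, both contained in $[0,\varepsilon/\|\tilde{v}\|]$, so they converge to a common limit $\bar{t}\in[0,\varepsilon/\|\tilde{v}\|]$, and since the midpoints $t_k$ lie inside these nested intervals, $t_k\to\bar{t}$ as well. Next I would prove the invariant $h_i(b)>h_i(a)$ (with $h_i$ as in \eqref{eq:def_h}) by induction over the iterations. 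At the start it is precisely the hypothesis of the lemma, $h_i(\varepsilon/\|\tilde{v}\|)=f_i(x+\frac{\varepsilon}{\|\tilde{v}\|}\tilde{v})-f_i(x)+c\varepsilon\|\tilde{v}\|>0=h_i(0)$. It is preserved by step~4: if $a$ is replaced by $t$, then $h_i(b)>h_i(t)$ by the very condition tested in step~4, so the new pair again satisfies the invariant; if $b$ is replaced by $t$, then the new right value satisfies $h_i(t)\ge h_i(b)>h_i(a)$.

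To extract the subgradient, I would apply Clarke's mean value theorem (cf.~\cite{C1983}, Thm.~2.3.7) to $f_i$ on the segment from $x+a_k\tilde{v}$ to $x+b_k\tilde{v}$, just as in the proof of Lemma~\ref{lem:step_size_bound}; the linear term $ct\|\tilde{v}\|^2$ in $h_i$ passes through the difference quotient unchanged, so there are $r_k\in(a_k,b_k)$ and $\xi_k\in\partial f_i(x+r_k\tilde{v})$ with
\begin{align*}
	\langle \tilde{v}, \xi_k \rangle + c\|\tilde{v}\|^2 = \frac{h_i(b_k)-h_i(a_k)}{b_k-a_k} > 0 ,
\end{align*}
the positivity being the invariant. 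Hence \eqref{eq:new_subgrad_condition} holds at $r_k$, and $r_k\to\bar{t}$. Because $f_i$ is locally Lipschitz, $\partial f_i$ is locally bounded, so $(\xi_k)_k$ has a convergent subsequence $\xi_{k_j}\to\xi'$, and by upper semicontinuity (closed graph) of the Clarke subdifferential $\xi'\in\partial f_i(x+\bar{t}\tilde{v})$.

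The difficulty concentrates in one point: turning the limiting inequality $\langle\tilde{v},\xi'\rangle\ge -c\|\tilde{v}\|^2$ that the argument above produces into the strict inequality \eqref{eq:new_subgrad_condition}. (Once this is known, the additional claim follows: the set of $s\in[0,\varepsilon/\|\tilde{v}\|]$ for which \emph{no} $\xi\in\partial f_i(x+s\tilde{v})$ satisfies \eqref{eq:new_subgrad_condition} is closed, again by upper semicontinuity of the Clarke subdifferential, so if it contained infinitely many $t_k$ it would also contain $\bar{t}$, contradicting the strict inequality at $\bar{t}$.) The obstruction is exactly that $\xi\mapsto\langle\tilde{v},\xi\rangle$ over $\partial f_i(\cdot)$ is only upper semicontinuous along the segment, so the strict sign cannot be read off the limit directly; I would instead argue by contradiction, assuming $\langle\tilde{v},\xi\rangle\le -c\|\tilde{v}\|^2$ for every $\xi\in\partial f_i(x+\bar{t}\tilde{v})$ and playing this off against the monotonicity of the iteration — which also yields $h_i(a_k)<h_i(\bar{t})$ for every $k$ while $h_i(a_k)\to h_i(\bar{t})$ — via a mean value estimate on a fixed subinterval ending at $\bar{t}$ together with the upper semicontinuity bound on a neighbourhood of $\bar{t}$. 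I expect this bookkeeping to run closely parallel to the termination analysis of the scalar method in \cite{MY2012}.
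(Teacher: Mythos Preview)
Your approach is essentially the paper's: nested intervals give $t_k\to\bar t$, the invariant $h_i(a_k)<h_i(b_k)$ is maintained, the mean value theorem on $[a_k,b_k]$ produces some $r_k\in(a_k,b_k)$ with a positive ``derivative'', and upper semicontinuity of the Clarke subdifferential transfers this to $\bar t$ and to $t_k$ for large $k$. The only cosmetic difference is that you apply the mean value theorem directly to $f_i$ and add the linear term afterwards, whereas the paper applies it to $h_i$ and then uses the chain rule $\partial h_i(t)\subseteq\langle\tilde v,\partial f_i(x+t\tilde v)\rangle+c\|\tilde v\|^2$ to extract $\xi'$; these are equivalent. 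On the strictness at $\bar t$ that you flag as the delicate step, the paper does not carry out the contradiction argument you sketch --- it simply asserts that upper semicontinuity of $\partial h_i$ gives some $\theta\in\partial h_i(\bar t)$ with $\theta>0$, and likewise that $\partial h_i(t_k)\cap\R^{>0}\neq\emptyset$ for all large $k$ --- so your treatment is, if anything, more careful here (one small slip: $h_i(a_k)<h_i(\bar t)$ need not hold for all $k$, since the bisection may eventually update only $b$, giving $\bar t=a_k$).
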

	\begin{proof}
		Let $(t_k)_k$ be finite with last element $\bar{t} \in (0,\frac{\varepsilon}{\| \tilde{v} \|})$. Then Algorithm \ref{algo:new_subgradient} must have stopped in step 3, i.e., some $\xi' \in \partial f_i(x + \bar{t} \tilde{v})$ satisfying \eqref{eq:new_subgrad_condition} was found. \\
		Now let $(t_k)_k$ be infinite. By construction, $(t_k)_k$ is a Cauchy sequence in the compact set $[0,\frac{\varepsilon}{\| \tilde{v} \|}]$, so it has to converge to some $\bar{t} \in [0,\frac{\varepsilon}{\| \tilde{v} \|}]$. Additionally, since \eqref{eq:accept_direction} is violated for the index $i$ by assumption, we have
		\begin{align*}
			h_i(0) = 0 \quad \text{and} \quad h_i \left( \frac{\varepsilon}{\| \tilde{v} \|} \right) > 0.
		\end{align*}				
		 Let $(a_k)_k$ and $(b_k)_k$ be the sequences corresponding to $a$ and $b$ in Algorithm \ref{algo:new_subgradient} (at the start of each iteration). Then $h_i(a_k) < h_i(b_k)$ for all $k \in \N$. Thus, by the mean value theorem, there has to be some $r_k \in (a_k,b_k)$ such that
		\begin{align*}
			0 < h_i(b_k) - h_i(a_k) \in \langle \partial h_i(r_k), b_k - a_k \rangle = \partial h_i(r_k) (b_k - a_k).
		\end{align*}
		In particular, $\lim_{k \rightarrow \infty} r_k = \bar{t}$ and since $a_k < b_k$, $\partial h_i(r_k) \cap \R^{> 0} \neq \emptyset$ for all $k \in \N$. By upper semicontinuity of $\partial h$ there must be some $\theta \in \partial h_i(\bar{t})$ with $\theta > 0$. By the chain rule, we have
		\begin{align} \label{eq:h_chain_rule}
			0 < \theta \in \partial h_i(\bar{t}) \subseteq \langle \tilde{v}, \partial f_i(x + \bar{t} \tilde{v}) \rangle + c \| \tilde{v} \|^2.
		\end{align}
		Thus, there must be some $\xi' \in \partial f_i(x + \bar{t} \tilde{v})$ with
		\begin{align*}
			& 0 < \langle \tilde{v}, \xi' \rangle + c \| \tilde{v} \|^2 \\
			\Leftrightarrow \quad & \langle \tilde{v}, \xi' \rangle > - c \| \tilde{v} \|^2.
		\end{align*}
		By upper semicontinuity of $\partial h$ it also follows that there is some $N \in \N$ such that $\partial h_i(t_k) \cap \R^{> 0} \neq \emptyset$ for all $k > N$. Applying the same argument as above completes the proof.
	\end{proof}
		
	In the following remark, we will briefly discuss the implication of Lemma \ref{lem:conv_new_subgradient} for practical use of Algorithm \ref{algo:new_subgradient}.
	
	\begin{remark}
		Let $N \in \N$ be as in Lemma \ref{lem:conv_new_subgradient}.
		\begin{itemize}
			\item[a)] Note that if $k > N$ and $h$ is differentiable in $t_k$, then we have
			\begin{align*}
				0 < \nabla h_i(t_k) = \langle \tilde{v}, \nabla f_i(x + t_k \tilde{v}) \rangle + c \| \tilde{v} \|^2,
			\end{align*}							
			i.e., the stopping criterion in step 3 is satisfied. Thus, if Algorithm \ref{algo:new_subgradient} generates an infinite sequence, $h$ must be nonsmooth in $t_k$ for all $k > N$. In particular, $f_i$ must be nonsmooth in $x + t_k \tilde{v}$ for all $k > N$.
			\item[b)] If $f$ is regular (cf.~\cite{C1983}, Def.~2.3.4), then equality holds when applying the chain rule to $h$ (cf.~\cite{C1983}, Thm. 2.3.10), i.e., 
			\begin{align*}
				\partial h_i(t_k) = \langle \tilde{v}, \partial f_i(x + t_k \tilde{v}) \rangle + c \| \tilde{v} \|^2.
			\end{align*}
			Thus, if Algorithm \ref{algo:new_subgradient} generates an infinite sequence, then for all $k > N$ there must be both positive and negative elements in $\partial h_i(t_k)$. By convexity of $\partial h_i(t_k)$, this implies that $0 \in \partial h_i(t_k)$ for all $k > N$, i.e., $h$ must have infinitely many (nonsmooth) stationary points in $[0,\frac{\varepsilon}{\| \tilde{v} \|}]$.
		\end{itemize}
	\end{remark}		
		
	Motivated by the previous remark, we will from now on assume that Algorithm \ref{algo:new_subgradient} stops after finitely many iterations and thus yields a new subgradient satisfying \eqref{eq:new_subgrad_condition}. We can use this method of finding new subgradients to construct an algorithm that computes descent directions of nonsmooth MOPs, namely Algorithm \ref{algo:descent_direction}.
	
	\begin{algorithm} 
		\caption{Compute descent direction}
		\label{algo:descent_direction}
		\begin{algorithmic}[1] 
			\REQUIRE Current point $x \in \R^n$, tolerances $\varepsilon, \delta > 0$, Armijo parameter $c \in (0,1)$.
			\STATE Compute $\xi^i_1 \in \partial_\varepsilon f_i(x)$ for all $i \in \{1,...,k\}$. Set $W_1 = \{ \xi^1_1, ..., \xi^k_1 \}$ and $l = 1$.
			\STATE Compute $v_l = \argmin_{v \in -\conv(W_l)} \| v \|^2$.
			\STATE If $\| v_l \| \leq \delta$ then stop.
			\STATE Find all objective functions for which there is insufficient descent:
				\begin{align*}
					I_l = \left\{ j \in \{1,...,k\} : f_j \left( x + \frac{\varepsilon}{\| v_l \|} v_l \right) > f_j(x) - c \varepsilon \| v_l \| \right\}.
				\end{align*}								
				If $I_l = \emptyset$ then stop.
			\STATE For each $j \in I_l$, compute $t \in (0,\frac{\varepsilon}{\| v_l \|}]$ and $\xi^j_l \in \partial f_j(x + t v_l)$ such that 
				\begin{align*}
					\langle v_l, \xi^j_l \rangle > -c \| v_l \|^2
				\end{align*}
				via Algorithm \ref{algo:new_subgradient}.
			\STATE Set $W_{l+1} = W_l \cup \{ \xi^j_l : j \in I_l \}$, $l = l+1$ and go to step 2.
		\end{algorithmic} 
	\end{algorithm}	
	
	The following theorem shows that Algorithm \ref{algo:descent_direction} stops after a finite number of iterations and produces an acceptable descent direction (cf.~\eqref{eq:accept_direction}).
		
	\begin{theorem} \label{thm:descent_dir_conv}
		Algorithm \ref{algo:descent_direction} terminates. In particular, if $\tilde{v}$ is the last element of $(v_l)_l$, then either $\| \tilde{v} \| \leq \delta$ or $\tilde{v}$ is an acceptable descent direction, i.e.,
		\begin{align*}
			f_i \left( x + \frac{\varepsilon}{\| \tilde{v} \|} \tilde{v} \right) \leq f_i(x) - c \varepsilon \| \tilde{v} \| \quad \forall i \in \{1,...,k\}.
		\end{align*}
	\end{theorem}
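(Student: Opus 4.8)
The plan is to argue by contradiction, following the scheme of the single‑objective proof in \cite{MY2012}: suppose Algorithm \ref{algo:descent_direction} does not terminate. Then neither step 3 nor step 4 ever triggers, so $(v_l)_l$ and $(W_l)_l$ are infinite sequences with $\| v_l \| > \delta$ and $I_l \neq \emptyset$ for every $l \in \N$, and I would derive a contradiction from a uniform per‑iteration decrease of $\| v_l \|$.

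First I would record the structural facts. By induction on $l$ one gets $W_l \subseteq F_\varepsilon(x)$: the initial elements satisfy $\xi^i_1 \in \partial_\varepsilon f_i(x) \subseteq F_\varepsilon(x)$ by step 1, and every element added in steps 5--6 has the form $\xi^j_l \in \partial f_j(x + t v_l)$ with $t \in (0, \varepsilon / \| v_l \|]$, so $\| x - (x + t v_l) \| = t \| v_l \| \le \varepsilon$ and hence $\partial f_j(x + t v_l) \subseteq \partial_\varepsilon f_j(x) \subseteq F_\varepsilon(x)$. In particular the $W_l$ are nested and all contained in the single compact set $F_\varepsilon(x)$, so $C := \max_{\xi \in F_\varepsilon(x)} \| \xi \|$ is finite; moreover $-v_l \in \conv(W_l) \subseteq F_\varepsilon(x)$ yields $\| v_l \| \le C$ for all $l$, and $C \ge \| v_1 \| > \delta > 0$, so in particular $\delta < C$.

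The heart of the argument is a one‑step estimate. Fix $l$, pick some $j \in I_l$, and let $\xi' := \xi^j_l \in W_{l+1}$ be the subgradient returned in step 5; by Lemma \ref{lem:infeasible_direction} and the standing assumption that Algorithm \ref{algo:new_subgradient} terminates, $\langle v_l, \xi' \rangle > -c \| v_l \|^2$. Since $-v_l \in \conv(W_l)$ and $\xi' \in W_{l+1}$, the point $-v_l + t(v_l + \xi') = (1-t)(-v_l) + t\xi'$ lies in $\conv(W_{l+1})$ for every $t \in [0,1]$, and because $-v_{l+1}$ is the projection of the origin onto $\conv(W_{l+1})$,
\[
\| v_{l+1} \|^2 \le \| {-v_l} + t(v_l + \xi') \|^2 = \| v_l \|^2 - 2 t \langle v_l, v_l + \xi' \rangle + t^2 \| v_l + \xi' \|^2
\]
for every such $t$. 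Now $\langle v_l, v_l + \xi' \rangle = \| v_l \|^2 + \langle v_l, \xi' \rangle > (1-c) \| v_l \|^2 > (1-c)\delta^2$ and $\| v_l + \xi' \| \le 2C$, and since $\delta < C$ the value $t := (1-c)\delta^2 / (4 C^2)$ lies in $(0,1)$; substituting it and simplifying gives
\[
\| v_{l+1} \|^2 \le \| v_l \|^2 - \frac{\big( (1-c)\delta^2 \big)^2}{4 C^2}.
\]
Adding the remaining subgradients from step 6 only moves the projection closer to the origin, so this bound is unaffected.

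Iterating the bound over $l$ would give $\| v_l \|^2 \le \| v_1 \|^2 - (l-1)\big((1-c)\delta^2\big)^2/(4C^2) \to -\infty$, which is absurd; hence Algorithm \ref{algo:descent_direction} terminates, necessarily at step 3 or at step 4, with $\tilde v = v_l$ for the final iteration index $l$. If it stops at step 3, then $\| \tilde v \| \le \delta$. If it stops at step 4, then $I_l = \emptyset$, i.e.\ $f_i\left( x + \frac{\varepsilon}{\| \tilde v \|} \tilde v \right) \le f_i(x) - c \varepsilon \| \tilde v \|$ for all $i \in \{1,\dots,k\}$, which is precisely the claimed acceptability of $\tilde v$. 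I expect the only delicate point to be the bookkeeping in the quadratic estimate — choosing $t$ so that it provably lies in $(0,1)$ and produces a decrease constant depending only on $c$, $\delta$ and $C$ (one could alternatively take the exact minimizer over $t \in [0,1]$ and split into the cases where it does or does not fall in the interior); the nesting of the $W_l$, the compactness bound $C$, and the final telescoping argument are all routine.
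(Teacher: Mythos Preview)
Your proposal is correct and follows essentially the same strategy as the paper: assume non-termination, use the convex combination $(1-t)(-v_l) + t\xi'$ in $\conv(W_{l+1})$ together with the inequality $\langle v_l, \xi'\rangle > -c\|v_l\|^2$ from step~5 and a uniform bound on $\|v_l + \xi'\|$ to get a quantitative decrease of $\|v_l\|^2$, then telescope to a contradiction. The only cosmetic difference is that the paper chooses the step parameter depending on $\|v_{l-1}\|$ to obtain a geometric decay $\|v_l\|^2 < r\|v_{l-1}\|^2$ with $r \in (0,1)$, whereas you substitute the lower bound $\|v_l\| > \delta$ first and obtain a fixed additive decrement per iteration; both yield the same contradiction.
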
			
	\begin{proof}
		Assume that Algorithm \ref{algo:descent_direction} does not terminate, i.e., $(v_l)_{l \in \N}$ is an infinite sequence. Let $l > 1$ and $j \in I_{l-1}$. Since $\xi^j_{l-1} \in W_l$ and $-v_{l-1} \in W_{l-1} \subseteq W_l$ we have
		\begin{align} \label{eq:proof_algo_est}
			\| v_l \|^2 &\leq \| -v_{l-1} + s (\xi^j_{l-1} + v_{l-1}) \|^2 \notag \\
			&= \| v_{l-1} \|^2 - 2 s \langle v_{l-1}, \xi^j_{l-1} + v_{l-1} \rangle + s^2 \| \xi^j_{l-1} + v_{l-1} \|^2 \notag \\
			&= \| v_{l-1} \|^2 - 2 s \langle v_{l-1}, \xi^j_{l-1} \rangle - 2 s \| v_{l-1} \|^2 + s^2 \| \xi^j_{l-1} + v_{l-1} \|^2 
		\end{align}
		for all $s \in [0,1]$. Since $j \in I_{l-1}$ we must have
		\begin{align} \label{eq:est_1}
			\langle v_{l-1}, \xi^j_{l-1} \rangle > -c \| v_{l-1} \|^2
		\end{align}				
		by step 5. Let $L$ be a common Lipschitz constant of all $f_i$, $i \in \{1,...,k\}$, on the closed $\varepsilon$-ball $B_\varepsilon(x)$ around $x$. Then by \cite{C1983}, Prop.~2.1.2, and the definition of the $\varepsilon$-subdifferential, we must have $\| \xi \| \leq L$ for all $\xi \in F_\varepsilon(x)$. So in particular,
		\begin{align} \label{eq:est_2}
			\| \xi^j_{l-1} + v_{l-1} \| \leq 2 L.
		\end{align}
		Combining \eqref{eq:proof_algo_est} with \eqref{eq:est_1} and \eqref{eq:est_2} yields
		\begin{align*}
			\| v_l \|^2 &< \| v_{l-1} \|^2 - 2 s c \| v_{l-1} \|^2 - 2 s \| v_{l-1} \|^2 + 4 s^2 L^2 \\
			&= \| v_{l-1} \|^2 - 2 s (c+1) \| v_{l-1} \|^2 + 4 s^2 L^2.
		\end{align*}
		Let $s := \frac{c+1}{4 L^2} \| v_{l-1} \|^2$. Since $c + 1 \in (1,2)$ and $\| v_{l-1} \| \leq L$ we have $s \in (0,1)$. We obtain
		\begin{align*}
			\| v_l \|^2 &< \| v_{l-1} \|^2 - 2 \frac{(c+1)^2}{4 L^2} \| v_{l-1} \|^4 + \frac{(c+1)^2}{4 L^2} \| v_{l-1} \|^4 \\
			&= \left( 1 - \frac{(c+1)^2}{4 L^2} \| v_{l-1} \|^2 \right) \| v_{l-1} \|^2.
		\end{align*}
		Since Algorithm \ref{algo:descent_direction} did not terminate, it holds $\| v_{l-1} \| > \delta$. It follows that
		\begin{align*}
			\| v_l \|^2 < \left( 1 - \left( \frac{c+1}{2 L} \delta \right)^2 \right) \| v_{l-1} \|^2.
		\end{align*}
		Let $r = 1 - \left( \frac{c+1}{2 L} \delta \right)^2$. Note that we have $\delta < \| v_l \| \leq L$ for all $l \in \N$, so $r \in (0,1)$. Additionally, $r$ does not depend on $l$, so we have
		\begin{align*}
			\| v_l \|^2 < r \| v_{l-1} \|^2 < r^2 \| v_{l-1} \|^2 < ... < r^{l-1} \| v_1 \|^2 \leq r^{l-1} L^2.
		\end{align*}
		In particular, there is some $l$ such that $\| v_l \| \leq \delta$, which is a contradiction.
	\end{proof}
	
	\begin{remark}
		The proof of Theorem \ref{thm:descent_dir_conv} shows that for convergence of Algorithm \ref{algo:descent_direction}, it would be sufficient to consider only a single $j \in I_j$ in step 5. Similarly, for the initial approximation $W_1$, a single element from $\partial_\varepsilon f_i(x)$ for any $i \in \{1,...,k\}$ would be enough. A modification of either step can potentially reduce the number of executions of step 5 (i.e., Algorithm \ref{algo:new_subgradient}) in Algorithm \ref{algo:descent_direction} in case the $\varepsilon$-subdifferentials of multiple objective functions are similar. Nonetheless, we will restrain the discussion in this article to Algorithm \ref{algo:descent_direction} as it is, since both modifications also introduce a bias towards certain objective functions, which we want to avoid.
	\end{remark}
	
	To highlight the strengths of Algorithm \ref{algo:descent_direction}, we will consider an example where standard gradient sampling approaches can fail to obtain a useful descent direction.
	
	\begin{example} \label{exam:compl_subdiff}
		For $a, b \in \R \setminus \{ 0 \}$ consider the locally Lipschitz function
		\begin{align*}
			f : \R^2 \rightarrow \R^2, \quad x \mapsto 
			\begin{pmatrix}
				(x_1 - 1)^2 + (x_2 - 1)^2 \\
				| x_2 - a |x_1|| + b x_2
			\end{pmatrix}.
		\end{align*}
		The set of nondifferentiable points is 
		\begin{align*}
			\Omega_f = (\{ 0 \} \times \R) \cup \{ (\lambda, a | \lambda| )^\top : \lambda \in \R \},
		\end{align*}
		separating $\R^2$ into four smooth areas (cf.~Figure \ref{fig:example_compl_subdiff}(a)). For large $a > 0$, the two areas above the graph of $\lambda \mapsto a | \lambda |$ become small, making it difficult to compute the subdifferential close to $(0,0)^\top$. 
		
		Let $a = 10$, $b = 0.5$, $\varepsilon = 10^{-3}$ and $x = (10^{-4}, 10^{-4})^\top$. In this case, $(0,0)^\top$ is the minimal point of $f_2$ and
		\begin{align*}
			\partial_\varepsilon f_2(x) &= \conv\left\{  
			\begin{pmatrix}
				-a \\
				b - 1
			\end{pmatrix},
			\begin{pmatrix}
				a \\
				b + 1
			\end{pmatrix},
			\begin{pmatrix}
				a \\
				b - 1
			\end{pmatrix},
			\begin{pmatrix}
				-a \\
				b + 1
			\end{pmatrix}					
			\right\} \\
			&= \conv\left\{  
			\begin{pmatrix}
				-10 \\
				-0.5
			\end{pmatrix},
			\begin{pmatrix}
				10 \\
				1.5
			\end{pmatrix},
			\begin{pmatrix}
				10 \\
				-0.5
			\end{pmatrix},
			\begin{pmatrix}
				-10 \\
				1.5
			\end{pmatrix}					
			\right\}.
		\end{align*}	
		In particular, $0 \in \partial_\varepsilon f_2(x)$, so the descent direction with the exact $\varepsilon$-subdifferentials from \eqref{eq:eps_min_norm_problem} is zero. When applying Algorithm \ref{algo:descent_direction} in $x$, after two iterations we obtain 
		\begin{align*}
			\tilde{v} = v_2 \approx (0.118, 1.185) \cdot 10^{-9},
		\end{align*}				
		i.e., $\| \tilde{v} \| \approx 1.191 \cdot 10^{-11}$. Thus, $x$ is correctly identified as (potentially) Pareto optimal.  The final approximation $W_2$ of $F_\varepsilon(x)$ is
		\begin{align*}
			W_2 = \left\{ \xi^1_1, \xi^2_1, \xi^2_2 \right\} = \left\{  
			\begin{pmatrix}
				10 \\
				-0.5
			\end{pmatrix},
			\begin{pmatrix}
				-1.9998 \\
				-1.9998
			\end{pmatrix},
			\begin{pmatrix}
				-10 \\
				1.5
			\end{pmatrix}
			\right\}.
		\end{align*}
		The first two elements of $W_2$ are the gradients of $f_1$ and $f_2$ in $x$ from the first iteration of Algorithm \ref{algo:descent_direction}, and the last element is the gradient of $f_2$ in $x' = x + tv_1 = (0.038, 0.596)^\top \cdot 10^{-3} \in B_\varepsilon(x)$ from the second iteration. The result is visualized in Figure \ref{fig:example_compl_subdiff}.
		\begin{figure}[ht] 
			\parbox[b]{0.49\textwidth}{
				\centering 
				\includegraphics[width=0.45\textwidth]{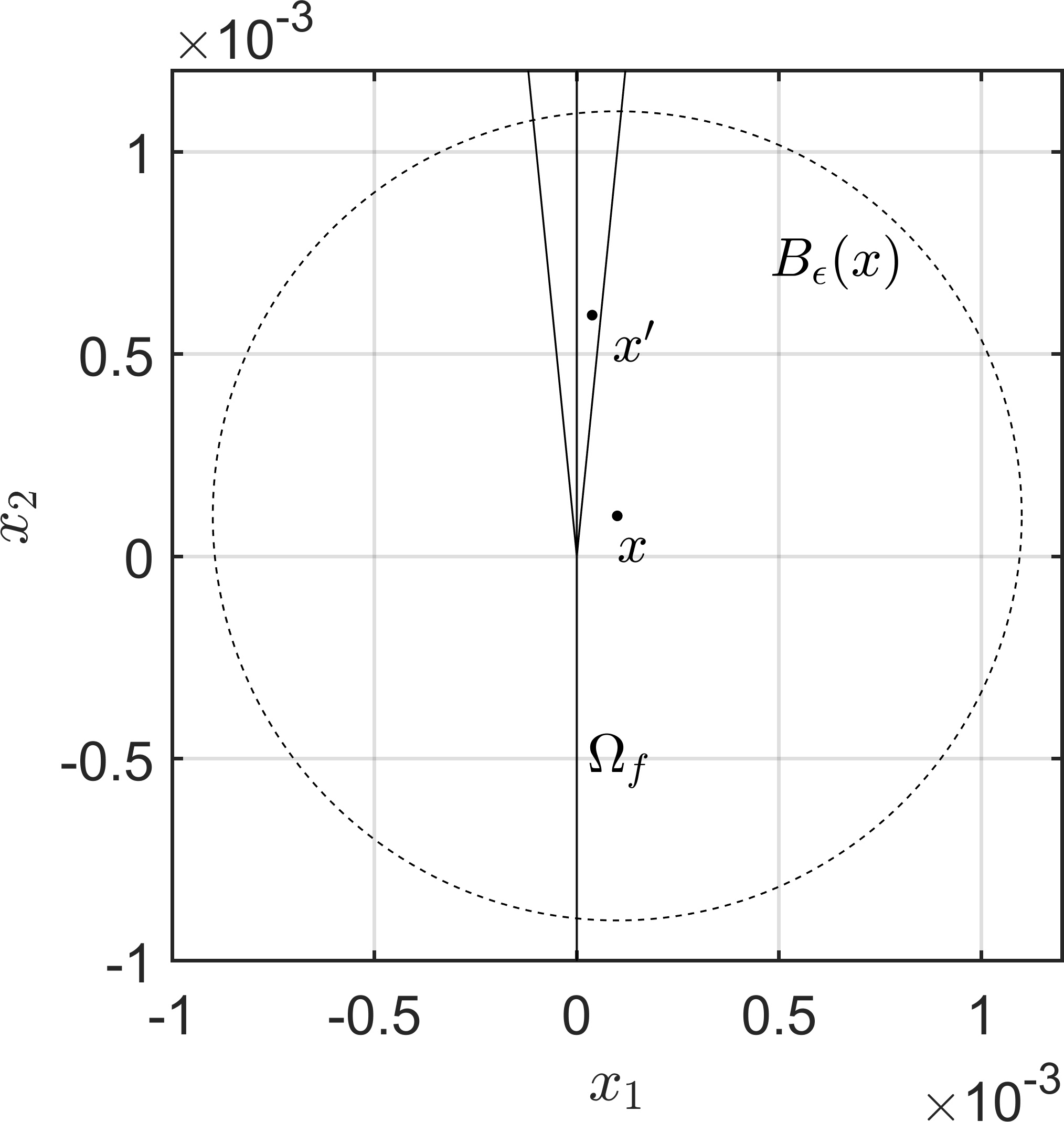}\\
				\textbf{(a)}
			}
			\parbox[b]{0.49\textwidth}{
				\centering 
				\includegraphics[width=0.45\textwidth]{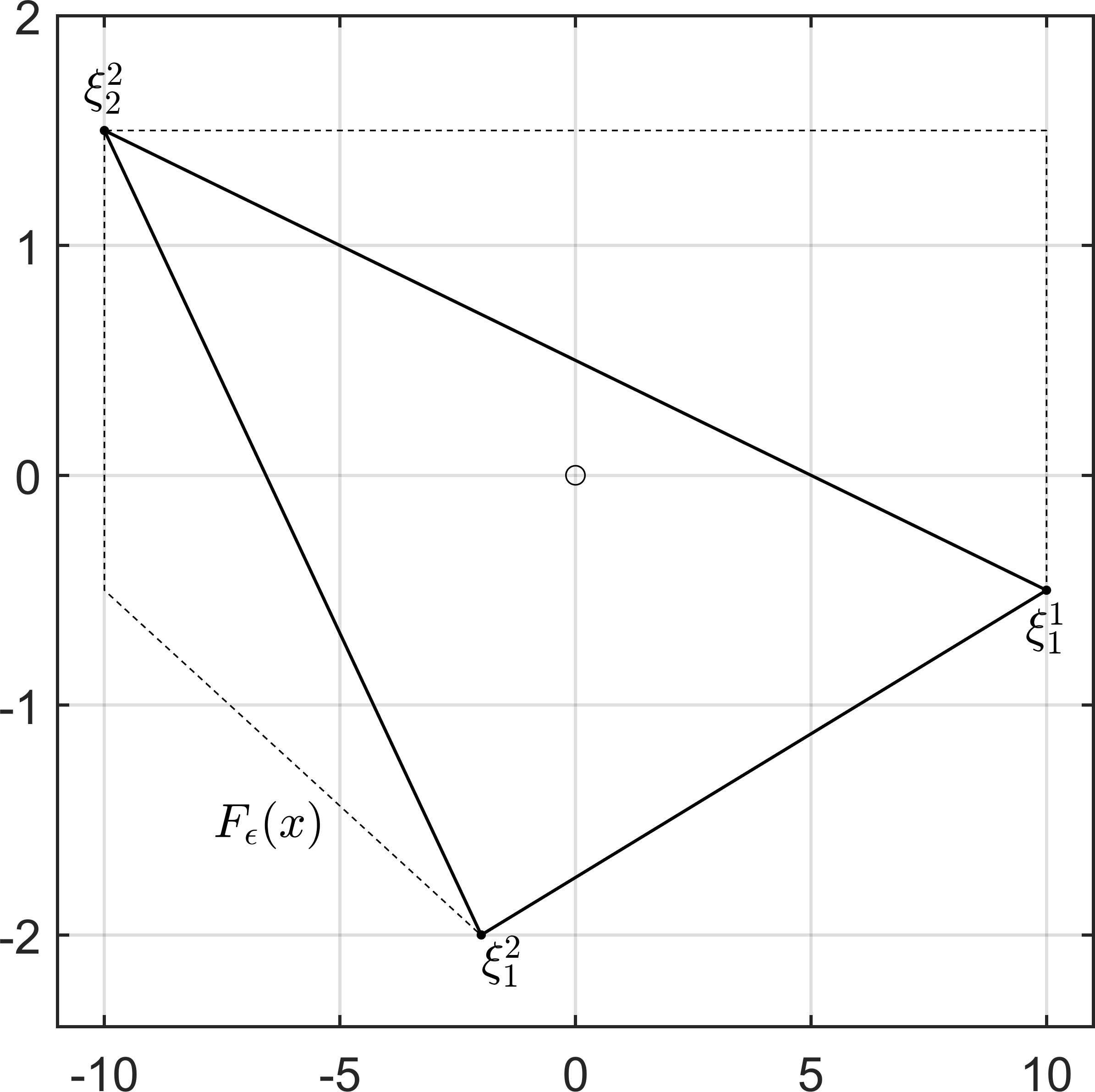}\\
				\textbf{(b)} 
			}
			\caption{\textbf{(a)} The set of nondifferentiable points $\Omega_f$ of $f$, the ball $B_\varepsilon(x)$ for the $\varepsilon$-subdifferential and the point in which subgradients where computed for Algorithm \ref{algo:descent_direction} in Example \ref{exam:compl_subdiff}. \textbf{(b)} The approximation of $F_\varepsilon(x)$ in Algorithm \ref{algo:descent_direction}.}
			\label{fig:example_compl_subdiff}
		\end{figure}
	\end{example}	
	
	Building on Algorithm \ref{algo:descent_direction}, it is now straightforward to construct the descent method for locally Lipschitz continuous MOPs given in Algorithm \ref{algo:nonsmooth_descent_method}. In step 4, the classical Armijo backtracking line search was used (cf.~\cite{FS2000}) for the sake of simplicity. Note that it is well defined due to step 4 in Algorithm \ref{algo:descent_direction}.
	
	\begin{algorithm} 
		\caption{Nonsmooth descent method}
		\label{algo:nonsmooth_descent_method}
		\begin{algorithmic}[1] 
			\REQUIRE Initial point $x_1 \in \R^n$, tolerances $\varepsilon, \delta > 0$, Armijo parameters $c \in (0,1), t_0 > 0$.
			\STATE Set $j = 1$.
			\STATE Compute a descent direction $v_j$ via Algorithm \ref{algo:descent_direction}.
			\STATE If $\| v_j \| \leq \delta$ then stop.
			\STATE Compute
				\begin{align*}
					\bar{s} = \inf(\{ s \in \N \cup \{ 0 \} : f_i(x_j + 2^{-s} t_0 v_j) \leq f_i(x_j) - 2^{-s} t_0 c \| v_j \|^2 \ \forall i \in \{1,...,k\} \})
				\end{align*}
				and set $\bar{t} = \max( \{ 2^{-\bar{s}} t_0, \frac{\varepsilon}{\| v_j \|} \} )$.
			\STATE Set $x_{j+1} = x_j + \bar{t} v_j$, $j = j+1$ and go to step 2.
		\end{algorithmic} 
	\end{algorithm}	
	
	Since we introduced the two tolerances $\varepsilon$ (for the $\varepsilon$-subdifferential) and $\delta$ (as a threshold for when we consider $\varepsilon$-subgradients to be zero), we can not expect that Algorithm \ref{algo:nonsmooth_descent_method} computes points which satisfy the optimality condition \eqref{eq:KKT}. This is why we introduce the following definition, similar to the definition of $\varepsilon$-stationarity from \cite{BLO2005}.
	
	\begin{definition}
		Let $x \in \R^n$, $\varepsilon > 0$ and $\delta > 0$. Then $x$ is called $(\varepsilon, \delta)$\emph{-critical}, if
		\begin{align*}
			\min_{v \in -F_\varepsilon(\bar{x})} \| v \| \leq \delta.
		\end{align*}
	\end{definition}	
	
	It is easy to see that $(\varepsilon, \delta)$-criticality is a necessary optimality condition for Pareto optimality, but a weaker one than \eqref{eq:KKT}. The following theorem shows that convergence in the sense of $(\varepsilon, \delta)$-criticality is what we can expect from our descent method.
	
	\begin{theorem}
		Let $(x_j)_j$ be the sequence generated by Algorithm \ref{algo:nonsmooth_descent_method}. Then either $(f_i(x_j))_j$ is unbounded below for each $i \in \{1,...,k\}$, or $(x_j)_j$ is finite with the last element being $(\varepsilon,\delta)$-critical.			 
	\end{theorem}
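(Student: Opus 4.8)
The plan is to prove the dichotomy by a case distinction on whether Algorithm~\ref{algo:nonsmooth_descent_method} terminates. Suppose first that it stops at step~3 of some iteration $j$, so that $\| v_j \| \leq \delta$. By construction in Algorithm~\ref{algo:descent_direction}, $v_j = \argmin_{v \in -\conv(W_j)} \| v \|^2$ for a finite set $W_j$ with $\conv(W_j) \subseteq F_\varepsilon(x_j)$: the initial elements lie in $\bigcup_i \partial_\varepsilon f_i(x_j)$, and every element added in step~5 is a subgradient at a point of $B_\varepsilon(x_j)$ (since the step factor there is at most $\varepsilon / \| v_l \|$), hence an $\varepsilon$-subgradient. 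Since enlarging the feasible set in a norm-minimization problem can only lower the optimal value, $\min_{v \in -F_\varepsilon(x_j)} \| v \| \leq \| v_j \| \leq \delta$, i.e., the last iterate $x_j$ is $(\varepsilon,\delta)$-critical, which is the second alternative of the theorem.

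Now suppose the algorithm never stops, so $(v_l)_l$ and $(x_j)_j$ are infinite (each iteration being well defined since Algorithm~\ref{algo:descent_direction} terminates by Theorem~\ref{thm:descent_dir_conv}). The core of the argument is the uniform decrease estimate $f_i(x_{j+1}) \leq f_i(x_j) - c \varepsilon \| v_j \|$ for all $i \in \{1,\dots,k\}$ and all $j$. To derive it, observe that since step~3 did not trigger, Theorem~\ref{thm:descent_dir_conv} forces $v_j$ to be an \emph{acceptable} descent direction, i.e. $f_i(x_j + \tfrac{\varepsilon}{\| v_j \|} v_j) \leq f_i(x_j) - c \varepsilon \| v_j \|$ for every $i$. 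Writing $\bar{t} = \max\{ 2^{-\bar{s}} t_0, \tfrac{\varepsilon}{\| v_j \|} \}$ as in step~4, there are two cases. If the maximum is attained at $\tfrac{\varepsilon}{\| v_j \|}$ --- which in particular covers the degenerate situation where the Armijo set is empty and $2^{-\bar{s}} t_0 = 0$ --- then $x_{j+1} = x_j + \tfrac{\varepsilon}{\| v_j \|} v_j$ and the acceptability inequality is exactly the claim. If instead $2^{-\bar{s}} t_0 > \tfrac{\varepsilon}{\| v_j \|}$, then $\bar{s}$ is finite and, by its definition, $f_i(x_{j+1}) \leq f_i(x_j) - 2^{-\bar{s}} t_0 \, c \| v_j \|^2 \leq f_i(x_j) - \tfrac{\varepsilon}{\| v_j \|} c \| v_j \|^2 = f_i(x_j) - c \varepsilon \| v_j \|$. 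In either case the estimate holds. Because the algorithm did not stop, $\| v_j \| > \delta$ for every $j$, so $f_i(x_{j+1}) \leq f_i(x_j) - c \varepsilon \delta$; telescoping gives $f_i(x_j) \leq f_i(x_1) - (j-1) \, c \varepsilon \delta \to -\infty$, hence $(f_i(x_j))_j$ is unbounded below for every $i$, which is the first alternative.

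Since the two cases are exhaustive --- and the first produces a finite sequence while the second an infinite one --- the dichotomy follows. I expect the only genuinely delicate point to be the bookkeeping in step~4 of Algorithm~\ref{algo:nonsmooth_descent_method}: one must verify that the per-step decrease $c \varepsilon \| v_j \|$ is achieved no matter which argument realizes the maximum defining $\bar{t}$, and in particular that the fallback step length $\tfrac{\varepsilon}{\| v_j \|}$ is always admissible. This is precisely where the acceptability of $v_j$ supplied by Theorem~\ref{thm:descent_dir_conv} (and, behind it, the step-size bound of Lemma~\ref{lem:step_size_bound}) enters. Everything else reduces to the telescoping estimate above together with the elementary monotonicity of minimal-norm values under set inclusion.
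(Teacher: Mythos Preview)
Your proof is correct and follows essentially the same route as the paper: a case split on whether the algorithm terminates, with the finite case reduced to the set inclusion $\conv(W)\subseteq F_\varepsilon(x_j)$ and the infinite case handled by a uniform per-step decrease that telescopes. The only noteworthy difference is that you derive the decrease $f_i(x_{j+1})\le f_i(x_j)-c\varepsilon\|v_j\|$ via the acceptability guaranteed by Theorem~\ref{thm:descent_dir_conv}, distinguishing the two branches of the $\max$ in step~4, whereas the paper invokes Lemma~\ref{lem:step_size_bound} and writes the bound without the factor~$c$; your bookkeeping is in fact the cleaner of the two, since Lemma~\ref{lem:step_size_bound} is stated for the exact minimizer over $-F_\varepsilon(x)$ rather than for the approximate direction $v_j$.
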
			
	\begin{proof}
		Assume that $(x_j)_j$ is infinite. Then $\| v_j \| > \delta$ for all $j \in \N$. By step 4 and Lemma \ref{lem:step_size_bound} we have 
		\begin{align*}
			f_i(x_j + \bar{t} v_j) -f_i(x_j) \leq - \bar{t} \| v_j \|^2 \leq - \varepsilon \| v_j \| < -\varepsilon \delta < 0
		\end{align*}
		for all $i \in \{1,...,k\}$. This implies that $(f_i(x_j))_j$ is unbounded below for each $i \in \{1,...,k\}$. \\
		Now assume that $(x_j)_j$ is finite, with $\bar{x}$ and $\bar{v}$ being the last elements of $(x_j)_j$ and $(v_j)_j$, respectively. Since the algorithm stopped, we must have $\| \bar{v} \| \leq \delta$. From the application of Algorithm \ref{algo:descent_direction} in step 2, we know that there must be some $\overline{W} \subseteq F_\varepsilon(\bar{x})$ such that $\bar{v} = \argmin_{v \in -\overline{W}} \| v \|^2$. This implies
		\begin{align*}
			\min_{v \in -F_\varepsilon(\bar{x})} \| v \| \leq \min_{v \in -\conv(\overline{W})} \| v \| = \| \bar{v} \| \leq \delta. 
		\end{align*}		 
	\end{proof}		
		
	\section{Numerical examples} \label{sec:numerical_examples}
	
	In this section we will apply our nonsmooth descent method (Algorithm \ref{algo:nonsmooth_descent_method}) to several examples. We will begin by discussing its typical behavior before comparing its performance to the \emph{multiobjective proximal bundle method} \cite{MKW2014}. Finally, we will combine our method with the \emph{subdivision algorithm} \cite{DSH2005} in order to approximate the entire Pareto set of nonsmooth MOPs.
	
	\subsection{Typical behavior}
	In smooth areas, the behavior of Algorithm \ref{algo:nonsmooth_descent_method} is almost identical to the behavior of the multiobjective steepest descent method \cite{FS2000}. The only difference stems from the fact that, unlike the Clarke subdifferential, the $\varepsilon$-subdifferential does not reduce to the gradient when $f$ is continuously differentiable. As a result, Algorithm \ref{algo:nonsmooth_descent_method} may behave differently in points $x \in \R^n$ where 
	\begin{align*}
		\max \{ \| \nabla f_i(x) - \nabla f_i(y) \| : y \in B_\varepsilon(x), \ i \in \{1,...,k\} \}
	\end{align*}
	is large. (If $f$ is twice differentiable, this can obviously be characterized in terms of second order derivatives.) Nevertheless, if $\varepsilon$ is chosen small enough, this difference can be neglected. Thus, in the following, we will focus on the behavior with respect to the nonsmoothness of $f$.
	
	To show the typical behavior of Algorithm \ref{algo:nonsmooth_descent_method}, we consider the objective function
	\begin{align} \label{eq:MOP_14_15}
		f : \R^2 \rightarrow \R^2, \quad x \mapsto 
		\begin{pmatrix}
			\max \{ x_1^2 + (x_2 - 1)^2 + x_2 - 1, -x_1^2 - (x_2 - 1)^2 + x_2 + 1 \} \\
			-x_1 + 2 (x_1^2 + x_2^2 - 1) + 1.75 | x_1^2 + x_2^2 - 1 |
		\end{pmatrix}
	\end{align}
	from \cite{MKW2014} (combining \emph{Crescent} from \cite{K1985a} and \emph{Mifflin 2} from \cite{MN1992}). The set of nondifferentiable points is $\Omega_f = S^1 \cup (S^1 + (0,1)^\top)$. We consider the starting points 
	\begin{align*}
		x^1 = (0,-0.3)^\top, \quad x^2 = (0.6,1.0)^\top, \quad x^3 = (-1,-0.2)^\top,
	\end{align*}
	the tolerances $\varepsilon = 10^{-3}$, $\delta = 10^{-3}$ and the Armijo parameters $c = 0.25$, $t_0 = 1$. The results are shown in Figure \ref{fig:example_typical_behavior}.
	\begin{figure}[ht]
		\centering
		\includegraphics[width=0.7\textwidth]{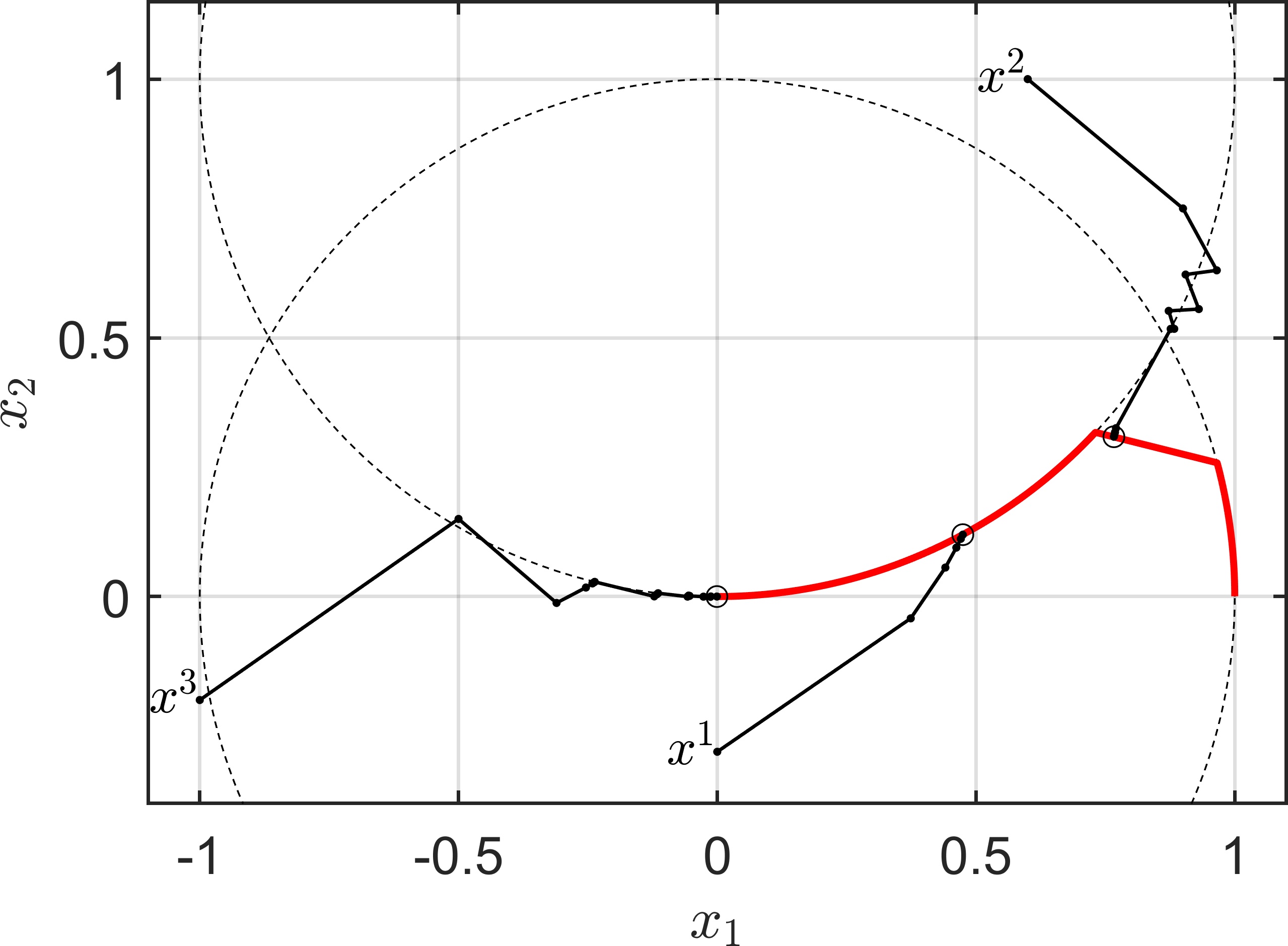}
		\caption{Result of Algorithm \ref{algo:nonsmooth_descent_method} in three different starting points for the MOP \eqref{eq:MOP_14_15}. The Pareto set is shown in red, the dashed lines show the set of nondifferentiable points $\Omega_f$.}
		\label{fig:example_typical_behavior}
	\end{figure}
	We will briefly go over the result for each starting point:
	\begin{itemize}
		\item For $x^1$, the sequence moves through the smooth area like the steepest descent method until a point is found with a distance less or equal $\varepsilon$ to the set of nondifferentiable points $\Omega_f$. In that point, more than one $\varepsilon$-subgradient is required to obtain a sufficient approximation of the $\varepsilon$-subdifferentials. Since this part of $\Omega_f$ is Pareto optimal, no acceptable descent direction (cf.~\eqref{eq:accept_direction}) is found and the algorithm stops (in a $(\varepsilon,\delta)$-critical point).
		\item For $x^2$, the sequence starts zig-zagging around the non-optimal part of $\Omega_f$, since the points are too far away from $\Omega_f$ for the algorithm to notice the nondifferentiability. When a point is found with distance less or equal $\varepsilon$ to $\Omega_f$, a better descent direction is found, breaking the zig-zagging motion.
		\item For $x^3$, the sequence has a similar zig-zagging motion to the previous case. The difference is that this time, the sequence moves along $\Omega_f$ until a Pareto optimal point in $\Omega_f$ is found.  
	\end{itemize}	
	
	As described above, the zig-zagging behavior when starting in $x^2$ is caused by the fact that $\varepsilon$ was too small for the method to notice the nondifferentiability. To circumvent problems like this and quickly move through problematic areas, it is possible to apply Algorithm \ref{algo:nonsmooth_descent_method} consecutively with decreasing values of $\varepsilon$. The result is Algorithm \ref{algo:eps_decr_descent_method}. (A similar idea was implemented in \cite{MY2012}.)
	
	\begin{algorithm} 
		\caption{$\varepsilon$-decreasing nonsmooth descent method}
		\label{algo:eps_decr_descent_method}
		\begin{algorithmic}[1] 
			\REQUIRE Initial point $x_1 \in \R^n$, tolerances $\delta, \varepsilon_1, ..., \varepsilon_K > 0$ , Armijo parameters $c \in (0,1), t_0 > 0$.
			\STATE Set $y_1 = x_1$.
			\FOR{$i = 1, ..., K$}
			\STATE Apply Algorithm \ref{algo:nonsmooth_descent_method} with initial point $y_i$ and tolerance $\varepsilon = \varepsilon_i$. Let $y_{i+1}$ be the final element in the generated sequence.
			\ENDFOR
		\end{algorithmic} 
	\end{algorithm}		
		
	\subsection{Comparison to the multiobjective proximal bundle method}

	We will now compare Algorithms \ref{algo:nonsmooth_descent_method} and \ref{algo:eps_decr_descent_method} to the \emph{multiobjective proximal bundle method} (MPB) by M\"akel\"a, Karmitsa and Wilppu from \cite{MKW2014} (see also \cite{M2003}). As test problems, we consider the 18 MOPs in Table \ref{table:test_problems}, which are created on the basis of the scalar problems from \cite{MKW2014}. Problems 1 to 15 are convex (and were also considered in \cite{MKM2018}) and problems 16 to 18 are nonconvex. Due to their simplicity, we are able to differentiate all test problems by hand to obtain explicit formulas for the subgradients. For each test problem, we choose 100 starting points on a 10 $\times$ 10 grid in the corresponding area given in Table \ref{table:test_problems}.
	
	\begin{table}
		\centering
		\caption{Test problems (using objectives from \cite{MKW2014})}
		\label{table:test_problems}
		\begin{small}
			\begin{tabular}{| c | l | c || c | l | c |}
				\hline
				Nr. & $f_i$ & Area & Nr. & $f_i$ & Area \\
				\hline
				1. & CB3, DEM & $[-3,3]^2$ & 10. & QL, LQ & $[-3,3]^2$  \\
				2. & CB3, QL & $[-3,3]^2$ & 11. & QL, Mifflin 1 & $[-3,3]^2$ \\
				3. & CB3, LQ & $[0.5,1.5]^2$ & 12. & QL, Wolfe & $[-3,3]^2$ \\
				4. & CB3, Mifflin 1 & $[-3,3]^2$ & 13. & LQ, Mifflin 1 & $[0.5,1.5] \times [-0.5,1]$ \\
				5. & CB3, Wolfe & $[-3,3]^2$ & 14. & LQ, Wolfe & $[-3,3]^2$ \\
				6. & DEM, QL & $[-3,3]^2$ & 15. & Mifflin 1, Wolfe & $[-3,3]^2$ \\ \cline{4-6}
				7. & DEM, LQ & $[-3,3]^2$ & 16. & Crescent, Mifflin 2 & $[-0.5,1.5]^2$ \\
				8. & DEM, Mifflin 1 & $[-3,3]^2$ & 17. & Mifflin 2, WF & $[-3,3]^2$ \\
				9. & DEM, Wolfe & $[-3,3]^2$ & 18. & Mifflin 2, SPIRAL & $[-3,3]^2$ \\
				\hline
			\end{tabular}
		\end{small}			
	\end{table}	
	
	For the MPB, we use the Fortran implementation from \cite{M2003} with the default parameters. For Algorithm \ref{algo:nonsmooth_descent_method}, we use $\varepsilon = 10^{-3}$, $\delta = 10^{-3}$, $c = 0.25$ and $t_0 = \max\{ \| v_j \|^{-1}, 1 \}$ (i.e., the initial step size $t_0$ is chosen depending on the norm of the descent direction $v_j$ in the current iteration). For Algorithm \ref{algo:eps_decr_descent_method}, we additionally use $\varepsilon_1 = 10^{-1}$, $\varepsilon_2 = 10^{-2}$, $\varepsilon_3 = 10^{-3}$. By this choice of parameters, all three methods produce results of similar approximation quality.
	
	To compare the performance of the three methods, we count the number of evaluations of objectives $f_i$, their subgradients $\xi \in \partial f_i$ and the number of iterations (i.e., descent steps) needed. (This means that one call of $f$ will account for $k$ evaluations of objectives.) Since the MPB always evaluates all objectives and subgradients in a point, the value for the objectives and the subgradients are the same here. The results are shown in Table \ref{table:performance} and are discussed in the following.
	\begin{table}
		\centering
		\caption{Performance of MPB, Algorithm \ref{algo:nonsmooth_descent_method} and Algorithm \ref{algo:eps_decr_descent_method} for the test problems in Table \ref{table:test_problems} for $100$ starting points}
		\label{table:performance}
		\begin{small}
			\begin{tabular}{|c !{\vrule width 2pt} c | c | c !{\vrule width 2pt} c | c | c !{\vrule width 2pt} c | c | c |}
				\hline
				& \multicolumn{3}{c !{\vrule width 2pt}}{\#$f_i$} & \multicolumn{3}{c !{\vrule width 2pt}}{\#$\partial f_i$} & \multicolumn{3}{c |}{\# Iter} \\
				\hline
				Nr. & MPB & Alg. \ref{algo:nonsmooth_descent_method} & Alg. \ref{algo:eps_decr_descent_method} & MPB & Alg. \ref{algo:nonsmooth_descent_method} & Alg. \ref{algo:eps_decr_descent_method} & MPB & Alg. \ref{algo:nonsmooth_descent_method} & Alg. \ref{algo:eps_decr_descent_method} \\
				\hline
				1. & \textbf{1780} & 6924 & 7801 & 1780 & \textbf{1102} & 1751 & 761 & \textbf{492} & 695 \\
				2. & \textbf{2522} & 14688 & 12263 & 2522 & \textbf{1906} & 2351 & 1151 & \textbf{842} & 914 \\
				3. & \textbf{880} & 5625 & 6447 & \textbf{880} & 921 & 1534 & \textbf{340} & 448 & 662 \\
				4. & \textbf{4416} & 103826 & 17664 & 4416 & 11774 & \textbf{3415} & 1832 & 4644 & \textbf{1242} \\
				5. & \textbf{2956} & 30457 & 16877 & \textbf{2956} & 3479 & 3037 & 1377 & 1616 & \textbf{1161} \\
				6. & \textbf{1640} & 8357 & 8684 & 1640 & \textbf{1209} & 1802 & 706 & \textbf{552} & 736 \\
				7. & \textbf{1702} & 8736 & 8483 & 1702 & \textbf{1307} & 1832 & 723 & \textbf{595} & 739 \\
				8. & \textbf{4226} & 8283 & 8620 & 4226 & \textbf{1318} & 1914 & 1204 & \textbf{582} & 759 \\
				9. & \textbf{1828} & 8201 & 8794 & 1828 & \textbf{1194} & 1805 & 793 & \textbf{536} & 732 \\
				10. & \textbf{1782} & 6799 & 7201 & 1782 & \textbf{1101} & 1722 & 684 & \textbf{543} & 733 \\
				11. & \textbf{4426} & 52096 & 17594 & 4426 & 6311 & \textbf{3189} & 1964 & 2442 & \textbf{1206} \\
				12. & \textbf{2482} & 15146 & 12446 & 2482 & \textbf{1992} & 2401 & 1140 & \textbf{967} & 1010 \\
				13. & \textbf{2662} & 36570 & 9513 & 2662 & 4958 & \textbf{2247} & 1221 & 1692 & \textbf{787} \\
				14. & \textbf{4264} & 95303 & 12227 & 4264 & 9524 & \textbf{2571} & 1774 & 4379 & \textbf{921} \\
				15. & \textbf{3594} & 85936 & 15669 & 3594 & 9329 & \textbf{3124} & 1444 & 3963 & \textbf{1125} \\
				\hline
				16. & \textbf{2206} & 20372 & 11094 & \textbf{2206} & 2596 & 2400 & \textbf{884} & 1194 & 947 \\
				17. & \textbf{2388} & 7920 & 5852 & 2388 & \textbf{1272} & 1556 & 868 & \textbf{626} & 706 \\
				18. & \textbf{11430} & 166707 & 31528 & 11430 & 16676 & \textbf{6902} & 2789 & 8291 & \textbf{2412} \\
				\hline
				Avg. & \textbf{3176.9} & 37885.9 & 12153.2 & 3176.9 & 4331.6 & \textbf{2530.7} & 1203.1 & 1911.3 & \textbf{971.5} \\
				     & 100\% & 1192.5\% & 382.5\% & 100\% & 136.3\% & 79.7\% & 100\% & 158.9\% & 80.8\% \\
				\hline
			\end{tabular}	
		\end{small}
	\end{table}
	\begin{itemize}
		\item \textbf{Function evaluations:} When considering the number of function evaluations, it is clear that the MPB requires far less evaluations than both of our algorithms. In our methods, these evaluations are used to check if a descent direction is acceptable (cf.~\eqref{eq:accept_direction}) and for the computation of the Armijo step length. One reason for the larger total amount is the fact that unlike the MPB, our methods are autonomous in the sense that they do not reuse information from previous iterations, so some information is potentially gathered multiple times. Additionally, the step length we use is fairly simple, so it might be possible to lower the number of evaluations by using a more sophisticated step length. When comparing our methods to each other, we see that Algorithm \ref{algo:eps_decr_descent_method} is a lot more efficient than Algorithm \ref{algo:nonsmooth_descent_method} when the number of evaluations is high and is slightly less efficient when the number of evaluations is low. The reason for this is that for simple problems (i.e., where the number of evaluations is low), some of the iterations of Algorithm \ref{algo:eps_decr_descent_method} will be redundant, because the $(\varepsilon_{i-1},\delta)$-critical point of the previous iteration is already $(\varepsilon_i,\delta)$-critical.
		\item \textbf{Subgradient evaluations:} For the subgradient evaluations, we see that MPB is slightly superior to our methods on problems 3, 5 and 16, but inferior on the rest. Regarding the comparison of Algorithms \ref{algo:nonsmooth_descent_method} and \ref{algo:eps_decr_descent_method}, we observe the same pattern as for the function evaluations: Algorithm \ref{algo:nonsmooth_descent_method} is superior for simple and Algorithm \ref{algo:eps_decr_descent_method} for complex problems.
		\item \textbf{Iterations:} For the number of iterations, besides problem 5, we see the exact same pattern as for the number of subgradient evaluations. Note that the MPB can perform \emph{null steps}, which are iterations where only the bundle is enriched, while the current point in the descent sequence stays the same.
	\end{itemize}		
	
	For our set of test problems, this leads us to the overall conclusion that in terms of function evaluations, the MPB seems to be superior to our methods, while in terms of subgradient evaluations, our methods seem to be (almost always) more efficient. Furthermore, we remark that the implementation of the MPB is somewhat challenging, whereas our method can be implemented relatively quickly.
	
	\subsection{Combination with the subdivision algorithm}
	Note that so far, we have a method where we can put in some initial point from $\R^n$ and obtain a single $(\varepsilon,\delta)$-critical point (close to an actual Pareto optimal point) as a result. But ultimately, we are not interested in one, but all Pareto optimal points. The intuitive and straightforward approach to extend our method would be to just take a large set of well-spread initial points and apply our method to each of them. The problem with this is that we have no guarantee that this results in a good approximation of the Pareto set. To solve this issue, we combine our method with the \emph{subdivision algorithm} which was developed for smooth problems in \cite{DSH2005}. Since we only have to do minor adjustments for the nonsmooth case, we will only sketch the method here and refer to \cite{DSH2005} for the details. 
	
	The idea is to interpret the nonsmooth descent method as a discrete dynamical system 
	\begin{align} \label{eq:dyn_system}
		x_{j+1} = g(x_j), \quad j = 0, 1, 2, ..., \quad x_0 \in \R^n,
	\end{align}
	where $g: \R^n \rightarrow \R^n$ is the map that applies one iteration of Algorithm \ref{algo:nonsmooth_descent_method} to a point in $\R^n$. (For the sake of brevity, we have omitted the rest of the input of the algorithm here.) Since no information is carried over between iterations of the algorithm, the trajectory (i.e., the sequence) generated by the system \eqref{eq:dyn_system} is the same as the one generated by Algorithm \ref{algo:nonsmooth_descent_method}. In particular, this means that the Pareto set of the MOP is contained in the set of fixed points of the system \eqref{eq:dyn_system}. Thus, the subdivision algorithm (which was originally designed to compute attractors of dynamical systems) can be used to compute (a superset of) the Pareto set. 
	
	The subdivision algorithm starts with a large hypercube (or \emph{box}) in $\R^n$ that contains the Pareto set and mainly consists of two steps:
	\begin{enumerate}
		\item \textbf{Subdivision:} Divide each box in the current set of boxes into smaller boxes.
		\item \textbf{Selection:} Compute the image of the union of the current set of boxes under $g$ and remove all boxes that have an empty intersection with this image. Go to step 1.
	\end{enumerate}
	In practice, we realize step 1 by evenly dividing each box into $2^n$ smaller boxes and step 2 by using the image of a set of sample points. The algorithm is visualized in Figure~\ref{fig:GAIO}.
	
	\begin{figure}[ht] 
		\centering
		\parbox[b]{0.3\textwidth}{
			\centering 
			\includegraphics[width=0.3\textwidth]{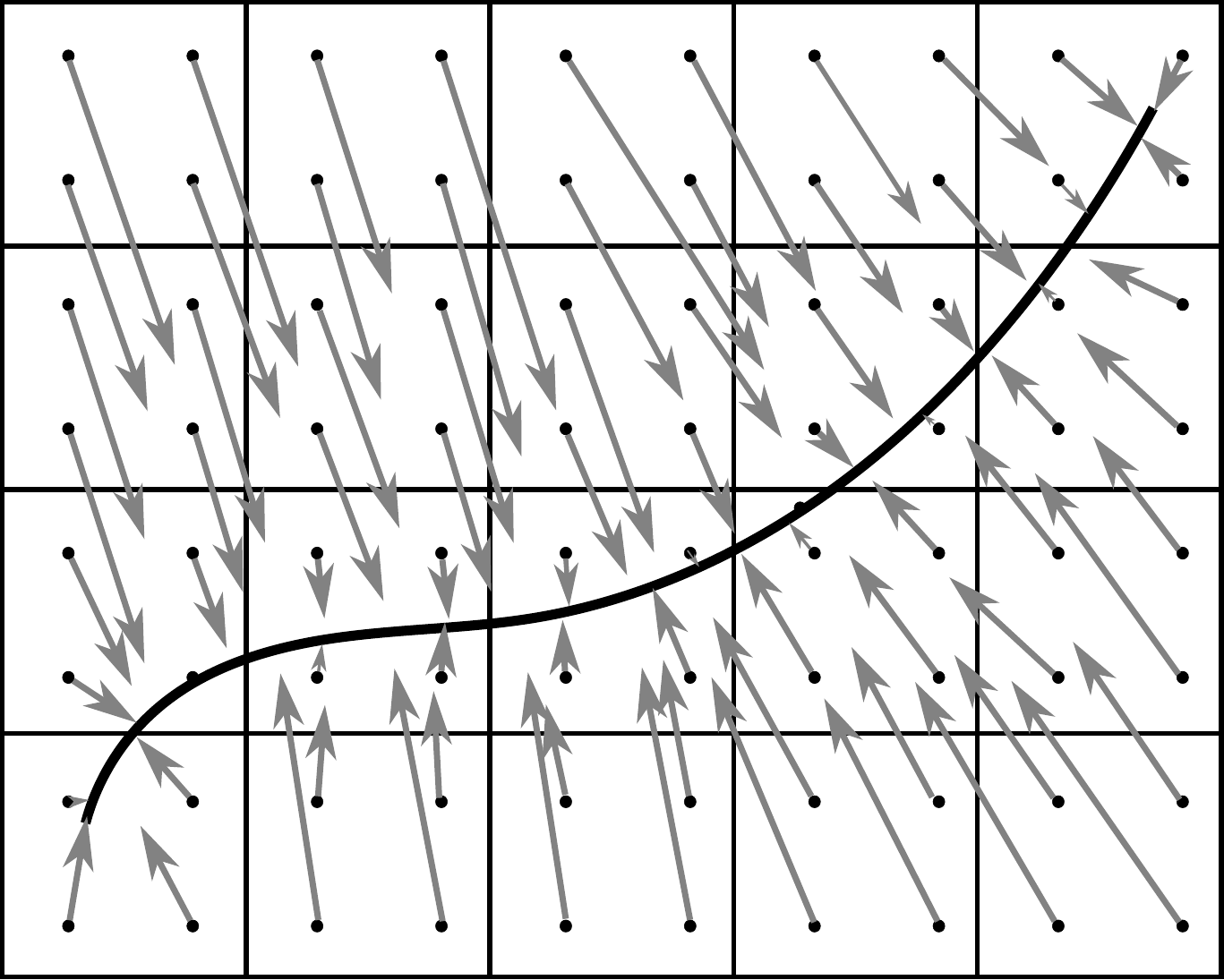}\\
			\textbf{(a)}
		}
		\hfil
		\parbox[b]{0.3\textwidth}{
			\centering 
			\includegraphics[width=0.3\textwidth]{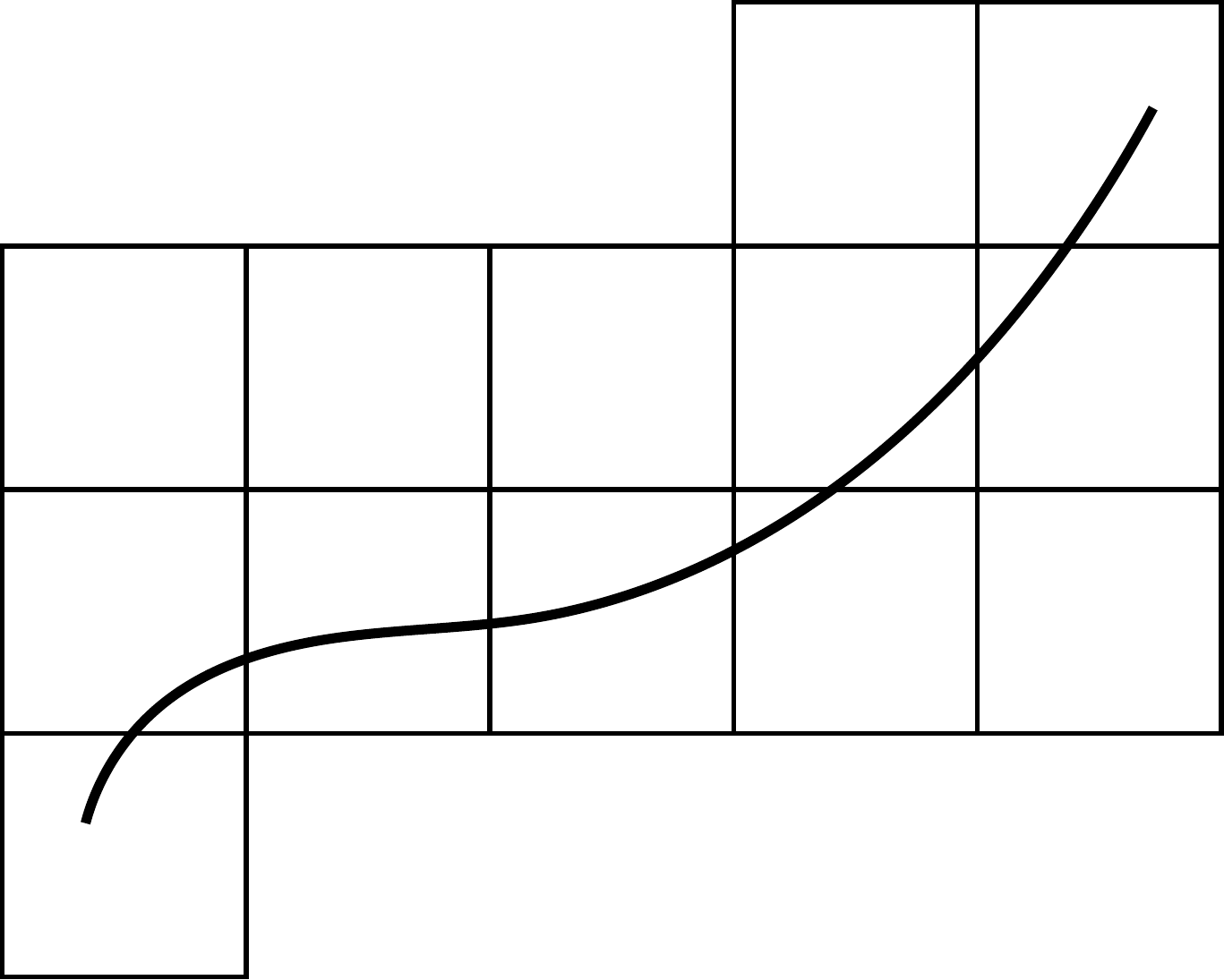}\\
			\textbf{(b)} 
		}
		\caption{Subdivision algorithm. \textbf{(a)} Applying $g$ to a set of sample points. \textbf{(b)} Selection step, where boxes with an empty intersection with the image of $g$ are removed.}
		\label{fig:GAIO}
	\end{figure}
	
	Unfortunately, the convergence results of the subdivison algorithm only apply if $g$ is a diffeomorphism. If the objective function $f$ is smooth, then the descent direction is at least continuous (cf.~\cite{FS2000}) and the resulting dynamical system $g$, while not being a diffeomorphism, still behaves well enough for the subdivision algorithm to work. If $f$ is nonsmooth, then our descent direction is inherently discontinuous close to the nonsmooth points. Thus, the  subdivision algorithm applied to \eqref{eq:dyn_system} will (usually) fail to work. In practice, we were able to solve this issue by applying multiple iterations of Algorithm \ref{algo:nonsmooth_descent_method} in $g$ at once instead of just one. Roughly speaking, this smoothes $g$ by pushing the influence of the discontinuity further away from the Pareto set and was sufficient for convergence (in our tests). 
	
	Figures \ref{fig:subdiv_9_10} to \ref{fig:subdiv_15_16} show the result of the subdivision algorithm for some of the problems from Table \ref{table:test_problems}. For each problem, we used $15$ iterations of Algorithm \ref{algo:nonsmooth_descent_method} in $g$, $[-3.1,3]^2$ as the starting box and applied $9$ iterations of the subdivision algorithm. For the approximation of the Pareto front (i.e., the image of the Pareto set), we evaluated $f$ in all points of the image of $g$ of the last selection step in the subdivision algorithm. In all of these examples, the algorithm produced a tight approximation of the Pareto set. 
	
	\begin{figure}[ht] 
		\parbox[b]{0.49\textwidth}{
			\centering 
			\includegraphics[width=0.325\textwidth]{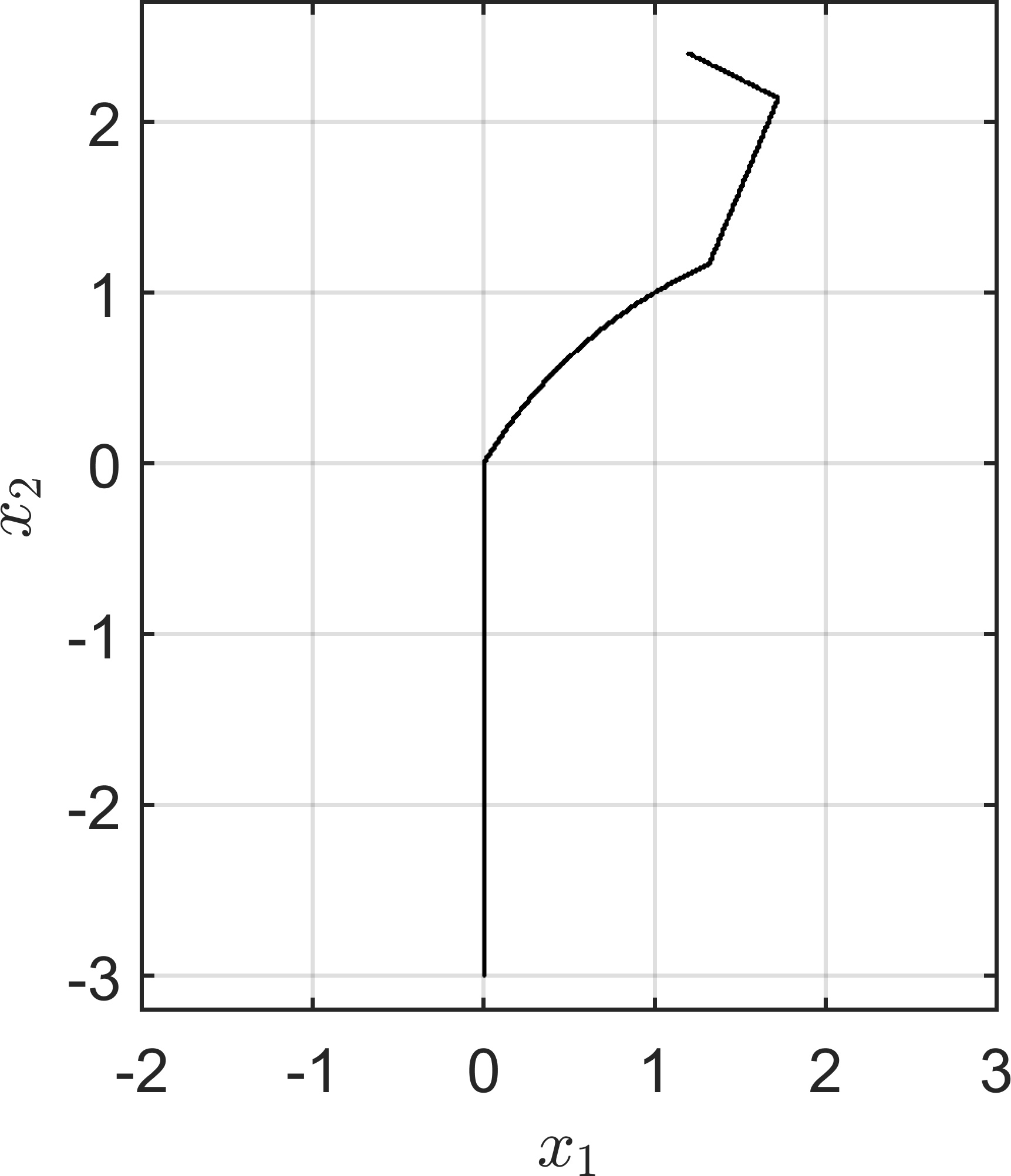}\\
			\textbf{(a)}
		}
		\parbox[b]{0.49\textwidth}{
			\centering 
			\includegraphics[width=0.45\textwidth]{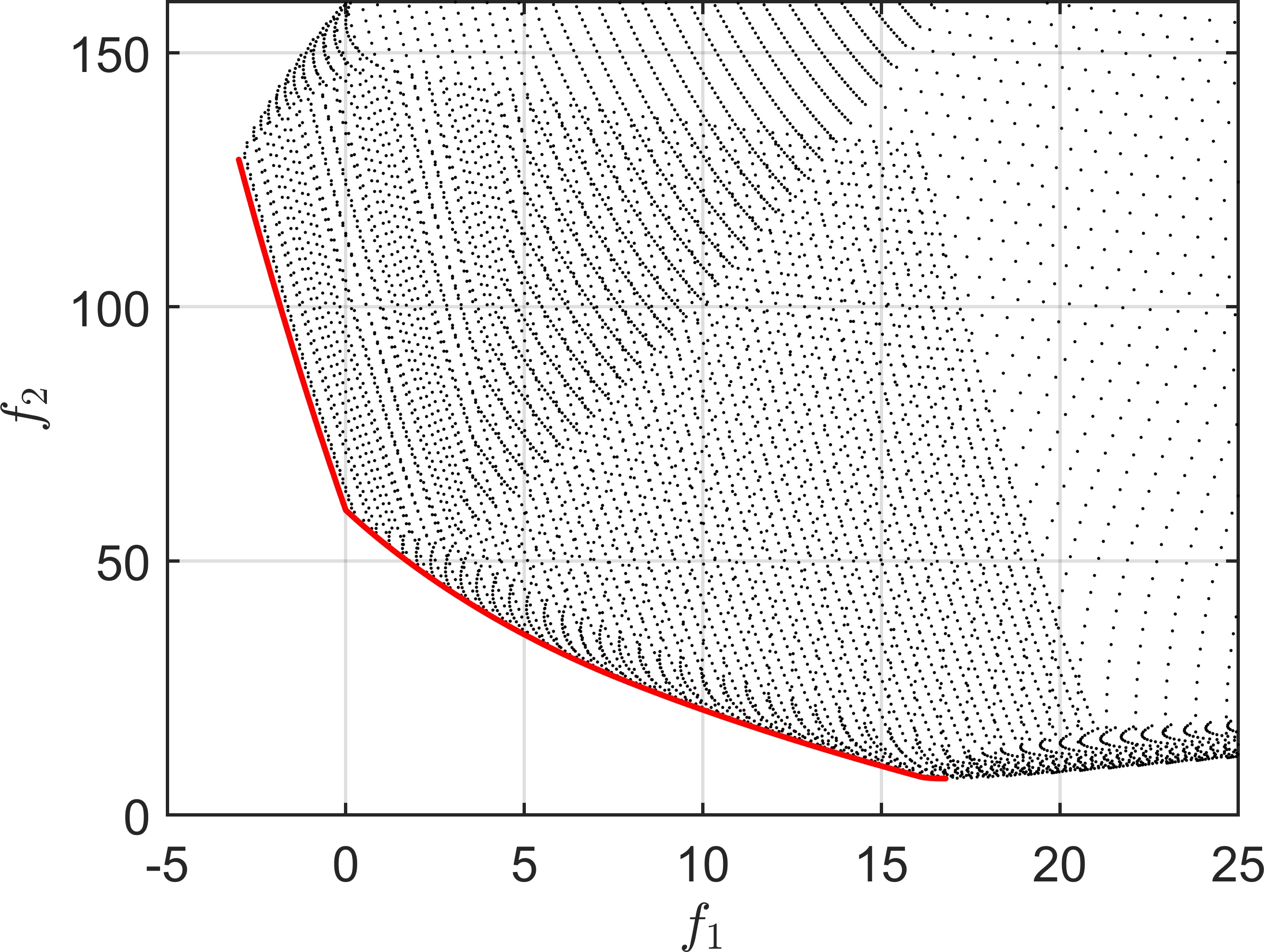}\\
			\textbf{(b)} 
		}
		\caption{\textbf{(a)} Result of the subdivision algorithm applied to problem 6 from Table \ref{table:test_problems}. \textbf{(b)} Corresponding approximation of the Pareto front (red) and a pointwise discretization of the image of $f$ (black).}
		\label{fig:subdiv_9_10}
	\end{figure}	
	
	\begin{figure}[ht] 
		\parbox[b]{0.49\textwidth}{
			\centering 
			\includegraphics[width=0.35\textwidth]{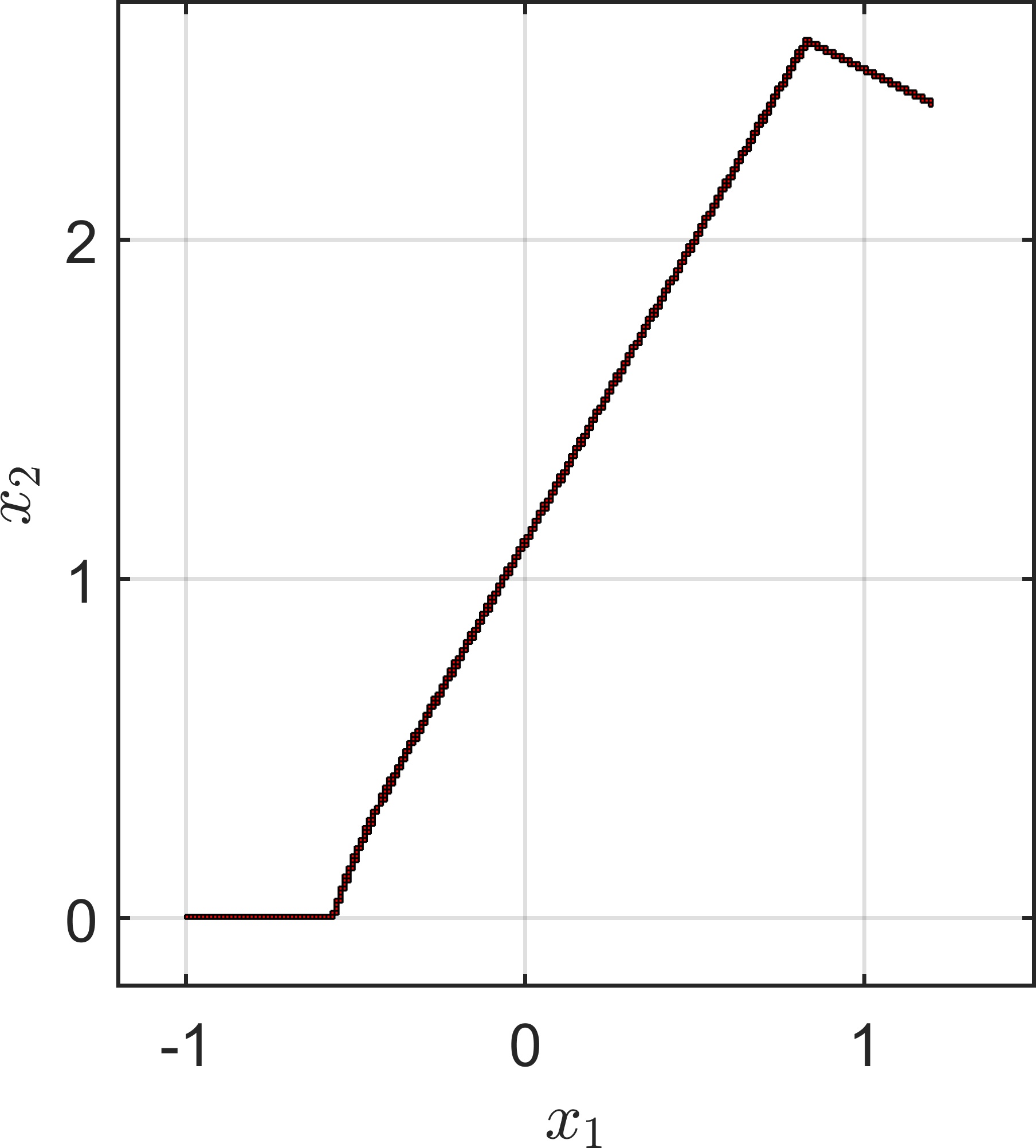}\\
			\textbf{(a)}
		}
		\parbox[b]{0.49\textwidth}{
			\centering 
			\includegraphics[width=0.45\textwidth]{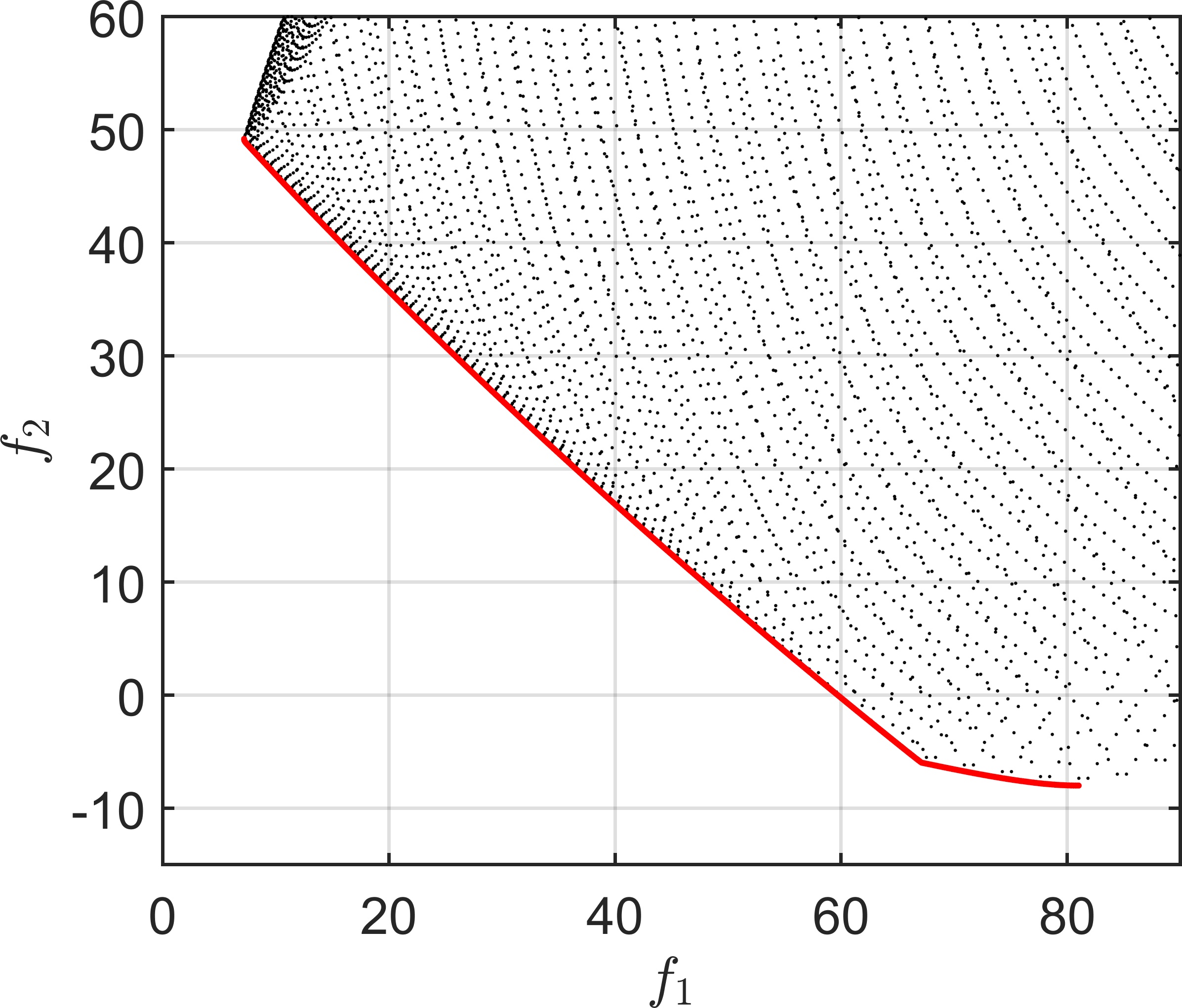}\\
			\textbf{(b)} 
		}
		\caption{\textbf{(a)} Result of the subdivision algorithm applied to problem 12 from Table \ref{table:test_problems}. \textbf{(b)} Corresponding approximation of the Pareto front (red) and a pointwise discretization of the image of $f$ (black).}
		\label{fig:subdiv_10_13}
	\end{figure}
	
	\begin{figure}[ht] 
		\parbox[b]{0.49\textwidth}{
			\centering 
			\includegraphics[width=0.45\textwidth]{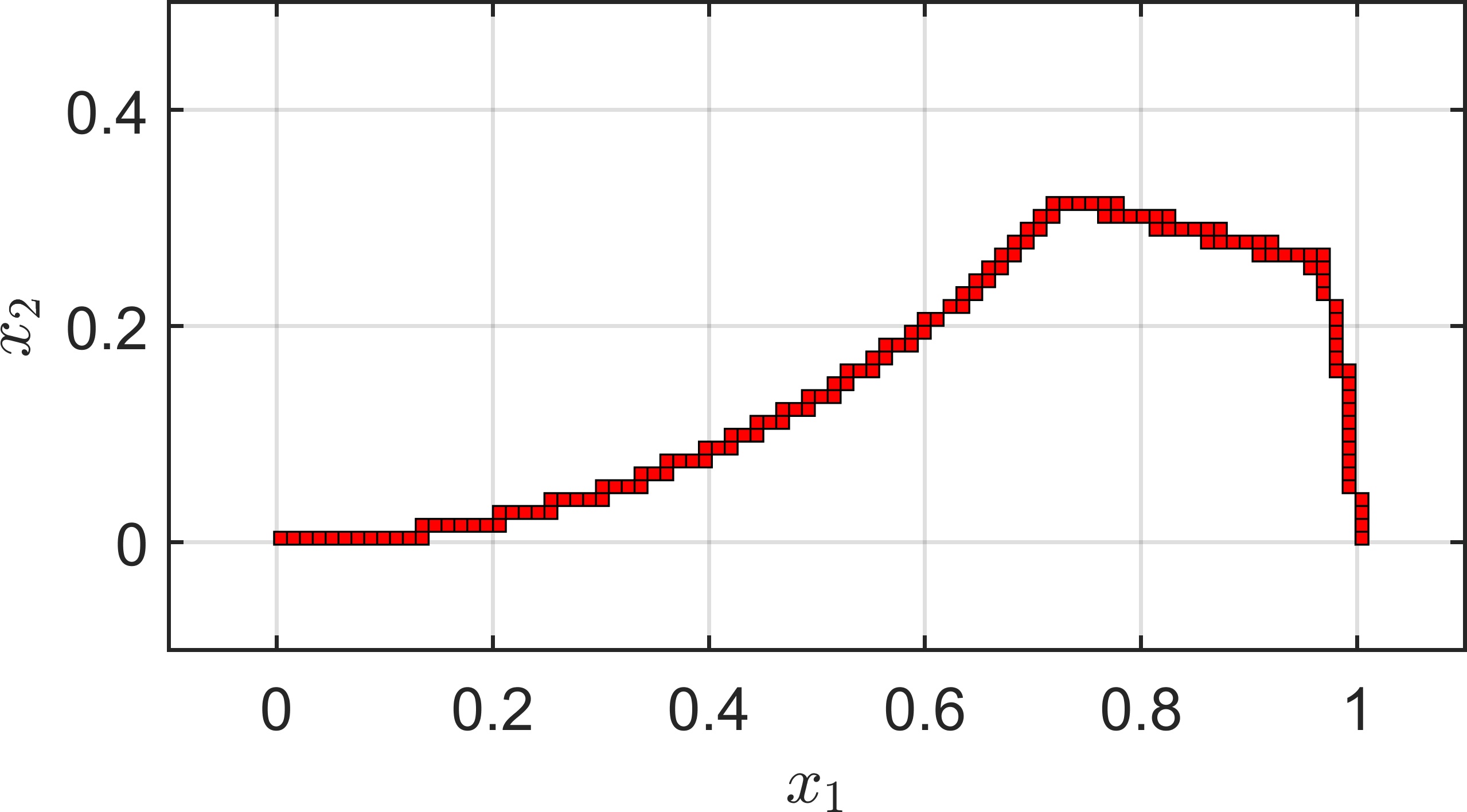}\\
			\textbf{(a)}
		}
		\parbox[b]{0.49\textwidth}{
			\centering 
			\includegraphics[width=0.45\textwidth]{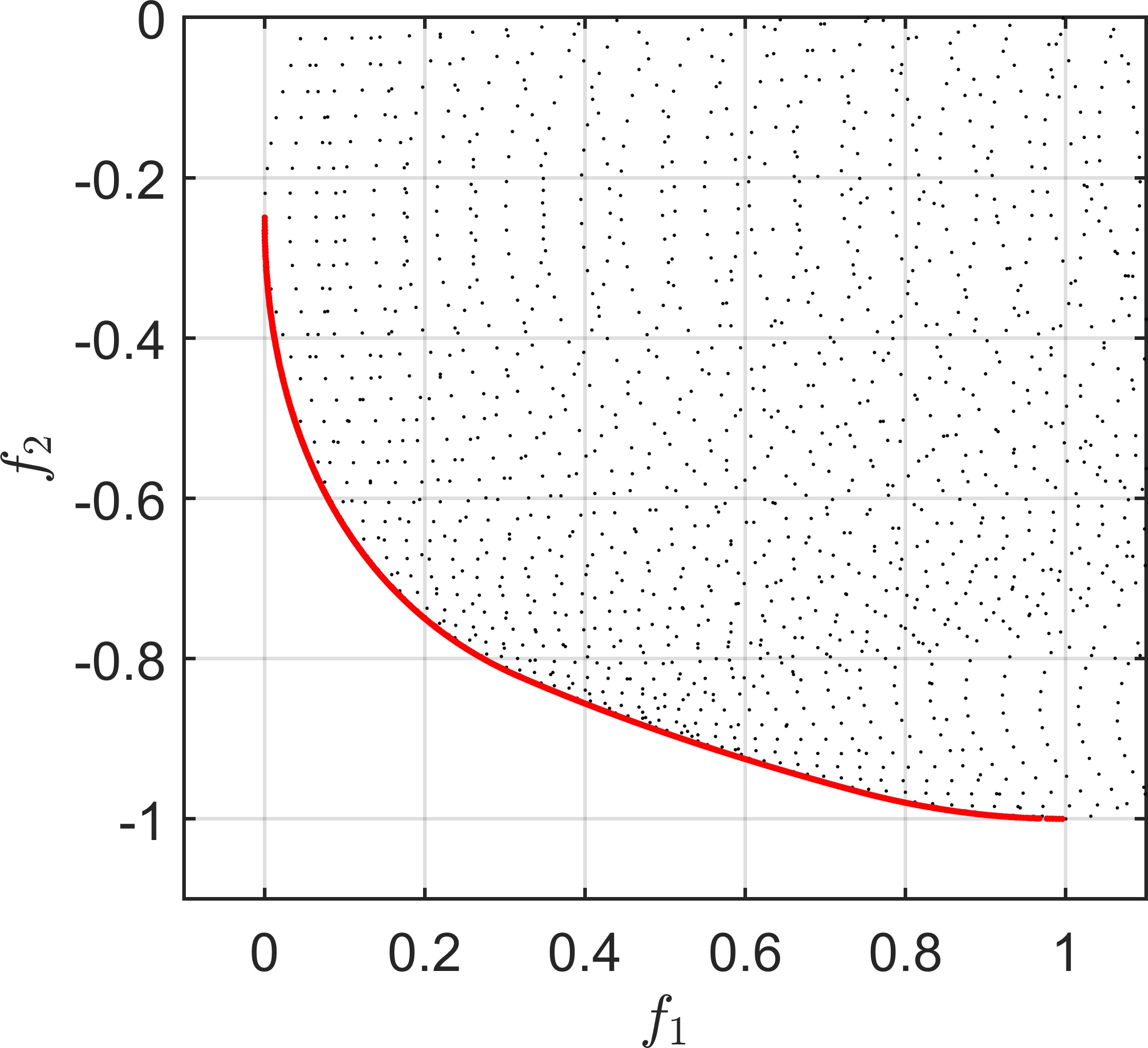}\\
			\textbf{(b)} 
		}
		\caption{\textbf{(a)} Result of the subdivision algorithm applied to problem 16 from Table \ref{table:test_problems}. \textbf{(b)} Corresponding approximation of the Pareto front (red) and a pointwise discretization of the image of $f$ (black).}
		\label{fig:subdiv_15_16}
	\end{figure}	
	
	\section{Conclusion and outlook} \label{sec:conclusion}
	In this article, we have developed a new descent method for locally Lipschitz continuous multiobjective optimization problems, which is based on the efficient approximation of the Clarke subdifferentials of the objective functions from \cite{MY2012}. In \cite{AGG2015}, it was shown that the element with the smallest norm in the negative convex hull of the union of the subdifferentials is a descent direction for all objectives at the same time. In practice, the entire subdifferentials which are required to compute this direction are rarely known and only single subgradients can be computed. To solve this issue, we presented a method to obtain an approximation of the subdifferentials which is sufficient to obtain a descent direction. The idea is to start with a rough approximation of the subdifferentials by only a few subgradients and then systematically enrich the approximation with new subgradients until a direction of sufficient descent is found. By combining the descent direction with an Armijo step length, we obtained a descent method for nonsmooth MOPs and showed convergence to points which satisfy a necessary condition for Pareto optimality. We then compared the performance to the multiobjective proximal bundle method from \cite{MKW2014}. For the 18 test problems we considered, the MPB was superior in terms of objective function evaluations, but our method required less subgradient evaluations and iterations. Finally, we showed that our descent method can be combined with the subdivision algorithm from \cite{DSH2005} to compute approximations of entire Pareto sets. 
	
	For future work, we believe that it is straightforward to extend our method to constrained MOPs by adding constraints to the problem \eqref{eq:approx_desc_dir} that ensure that the descent direction maintains the feasibility of the descent sequence (similar to \cite{GPD2017} for smooth problems). Additionally, in \cite{CQ2013}, the classical gradient sampling method for scalar nonsmooth optimization was generalized by allowing variable norms in the direction finding problem, increasing its efficiency. We expect that a similar generalization can be performed for problem \eqref{eq:approx_desc_dir}, which potentially yields a similar increase in efficiency. Additional potential for increased performance lies in more advanced step length schemes as well as descent directions with memory (for instance, conjugate-gradient-like). Furthermore, it might be possible to extend our method to infinite-dimensional nonsmooth MOPs \cite{M2008,CM2020}. Finally, in the context of nonsmooth many-objective optimization, we believe that considering subsets of objectives is a very promising and efficient approach (cf.~\cite{GPD2019} for smooth problems). However, theoretical advances are required for locally Lipschitz continuous problems.
		
	\section*{Acknowledgement}
	This research was funded by the DFG Priority Programme 1962 ''Non-smooth and Complement-arity-based Distributed Parameter Systems''.		
		
	\bibliography{literature}   
	\bibliographystyle{abbrv}

\end{document}